\theoremstyle{plain}
\newtheorem{theorem}{Theorem}[section]
\newtheorem{definition}[theorem]{Definition}
\newtheorem{lemma}{Lemma}[section]
\newtheorem{proposition}[theorem]{Proposition}
\newtheorem{remark}[theorem]{Remark}
\numberwithin{equation}{section}
\theoremstyle{definition}
\newcommand{\R}{\ensuremath{\mathbb{R}}}
\begin{document}

\title{Blow-up and global existence for solutions to the porous medium equation with reaction and \\ fast decaying density}
%
%
\author{Giulia Meglioli\thanks{Dipartimento di Matematica, Politecnico di Milano, Italia (giulia.meglioli@polimi.it).}\,\, and Fabio Punzo\thanks{Dipartimento di Matematica, Politecnico di Milano, Italia (fabio.punzo@polimi.it).}}
\date{}
%

%

\maketitle              

\begin{abstract}
We are concerned with nonnegative solutions to the Cauchy problem for the porous medium equation with a variable density $\rho(x)$ and a power-like reaction term $u^p$ with $p>1$. The density decays {\it fast} at infinity, in the sense that $\rho(x)\sim |x|^{-q}$ as $|x|\to +\infty$ with $q\ge 2.$
In the case when $q=2$, if $p$ is bigger than $m$, we show that, for large enough initial data, solutions blow-up in finite time and for small initial datum, solutions globally exist.
On the other hand, in the case when $q>2$, we show that existence of global in time solutions always prevails. The case of {\it slowly} decaying density at infinity, i.e. $q\in [0,2)$, is examined in \cite{MP1}.
\end{abstract}

\bigskip
\bigskip

\noindent {\it  2010 Mathematics Subject Classification: 35B44, 35B51,  35K57, 35K59, 35K65.}

\noindent {\bf Keywords:} Porous medium equation; Global existence; Blow-up; Sub--supersolutions; Comparison principle.

\section{Introduction}
We investigate global existence and blow-up of nonnegative solutions to problem
\begin{equation}
\begin{cases}
\rho(x) u_t = \Delta(u^m)+ \rho(x) u^p \qquad \text{in } \R^N \times (0,\tau) \\
u(x,0)=u_0(x) \qquad \qquad \text{in } \R^N \times \{0\}
\end{cases}
\label{problema}
\end{equation}
where $N\geq 3, u_0 \in L^{\infty}(\R^N), u_0\geq 0$, $\rho \in C(\R^N)$, $\rho > 0$, $p>1, m > 1$ and $\tau>0$. We always assume that
\begin{equation}\tag{{\it $H$}} \label{hp}
\begin{cases}
\text{(i)} \; \rho\in C(\mathbb R^N),\, \rho>0\,\, \textrm{in}\,\, \mathbb R^N\,,\\
\text{(ii)}\,  u_0\in L^\infty(\mathbb R^N), u_0\geq 0\,\, \textrm{in}\,\, \mathbb R^N\,,
\end{cases}
\end{equation}
and that
\begin{equation}\label{hpSup}
\begin{aligned}
&\text{there exist} \,\, k_1, k_2\in (0, +\infty)\,\, \text{with}\,\, k_1\leq k_2, r_0 > 0\,, q\ge 2 \,\, \text{such that} \\
&\,\,\,\,\,\,k_1(|x|+r_0)^q \le\dfrac{1}{\rho(x)}\le k_2(|x|+r_0)^q\quad \text{for all}\,\,\, x\in \R^N\,.
\end{aligned}
\end{equation}
The parabolic equation in problem \eqref{problema} is of the {\it porous medium} type, with a variable density $\rho(x)$ and a reaction term $\rho(x) u^p$.
Clearly, such parabolic equation is degenerate, since $m>1$. Moreover, the differential equation in \eqref{problema} is equivalent to 
\[ u_t =\frac 1{\rho(x)} \Delta(u^m)+ u^p \qquad \text{in } \R^N \times (0,\tau);\]
thus  the related diffusion operator is $\frac 1{\rho(x)}\Delta$, and in view of \eqref{hpSup}, the coefficient $\dfrac1{\rho(x)}$ can positively diverge at infinity.
The differential equation in \eqref{problema}, posed in the interval $(-1,1)$ with homogeneous Dirichlet boundary conditions, has been introduced in \cite{KR3} as a mathematical model of evolution of plasma temperature, where $u$ is the temperature, $\rho(x)$ is the particle density, $\rho(x) u^p$ represents the volumetric heating of plasma. Indeed, in \cite[Introduction]{KR3} a more general source term of the type $A(x) u^p$ has also been considered; however, then the authors assume that  $A\equiv 0$; only some remarks for the case $A(x)=\rho(x)$ are made in \cite[Section 4]{KR3}. Then in \cite{KR1} and \cite{KR2} the Cauchy problem \eqref{problema} is dealt with in the case without the reaction term $\rho(x) u^p.$

\smallskip

In view of \eqref{hpSup} the density $\rho$ decays at infinity. Indeed,
\begin{equation}\label{eqf3}
\frac{1}{k_2(|x|+r_0)^q}\leq \rho(x)\leq \frac{1}{k_1(|x|+r_0)^q}\quad \textrm{for all }\,\,|x|>1\,,
\end{equation}
with
$$q\ge2.$$

\smallskip

Since we assume \eqref{hpSup} with $q\geq 2$,
we refer to $\rho(x)$ as a {\em fast decaying density} at infinity. On the other hand, in \cite{MP1} it is studied problem \eqref{problema} with a {\em slowly decaying density,} that is \eqref{hpSup} is assumed with $q<2$.

There is a huge literature concerning various problems related to \eqref{problema}. For instance, problem \eqref{problema} with $\rho\equiv 1, m=1$ is studied in \cite{CFG, DL, FI, F, H, I, IM, L, Q, Sacks, S, Y}, problem \eqref{problema} without the reaction term $u^p$ is treated in \cite{Eid90, EK, GMPor, GMPfra1, GMPfra2, KKT, KPT, KP1, KP2, KRV10, KR1, KR2, KR3, NR, RA, RV06, RV08, RV09}. Moreover, problem \eqref{problema} with $m=1$ is addressed in \cite{LX} (see also \cite{dPRS}), where $\rho$ satisfies \eqref{eqf3} with $0\leq q< 2$. 
In particular, let us recall some results established in \cite{SGKM} for problem \eqref{problema} with $\rho\equiv 1, m>1, p>1$ (see also \cite{GV, MQV}). We have:
\begin{itemize}
\item (\cite[Theorem 1, p. 216]{SGKM}) For any $p>1$, for all sufficiently large initial data, solutions blow-up in finite time;
\item (\cite[Theorem 2, p. 217]{SGKM}) if $p\in\left(1,m+\frac2N\right)$, for \it all \rm initial data, solutions blow-up in finite time;
\item (\cite[Theorem 3, p. 220]{SGKM}) if $p>m+\frac2N$, for all sufficiently small initial data with compact support, solutions exist globally in time and belong to $L^\infty(\mathbb R^N \times (0, +\infty))$.
\end{itemize}
Similar results for quasilinear parabolic equations, also involving $p$-Laplace type operators or double-nonlinear operators, have been stated in \cite{AfT}, \cite{A1},  \cite{AT1}, \cite{dP1}, \cite{dPS1}, \cite{dPS2},  \cite{IS1}, \cite{IS2}, \cite{MT}, \cite{MTS}, \cite{MTS2},  \cite{MP}, \cite{MP2}, \cite{PT}, \cite{T1}, \cite{WZ} (see also \cite{MMP} for the case of Riemannian manifolds); moreover, in \cite{GMPhyp} the same problem on Cartan-Hadamard manifolds has been investigated.
In particular,  
in \cite[Theorem 2]{MT} it is shown that if $\rho(x)=(1+|x|)^{-q}$ with $0<q<2$, $p>m$, and $u_0$ is small enough (in an appropriate sense), then there exists a global solution; moreover, a smoothing estimate is given. Such result will be compared below with one of our results (see Remark \ref{ossMT}).

\smallskip

In \cite{MP1} the following results for problem \eqref{problema} are established, assuming \eqref{hpSup} with $0\leq q<2$. 
\begin{itemize}
\item (\cite[Theorem 2.1]{MP1}. If
\[p>\bar p,\]
$u_0$ has compact support and is small enough, then there exist global in time solutions to problem \eqref{problema} which belong to $L^\infty(\mathbb R^N \times (0, +\infty))$;
here $\bar p$ is a certain exponent, which depends on $N, m, q, k_1, k_2$. In particular, for $k_1=k_2$ we have
\[\bar p=m+\frac{2-q}{N-q}\,.\]
\item (\cite[Theorem 2.3]{MP1}). For any $p>1$, if $u_0$ is sufficiently large, then solutions to problem \eqref{problema} blow-up in finite time.
\item (\cite[Corollary 2.4]{MP1}). If $1<p<m$, then for any $u_0\not\equiv 0$, solutions to problem \eqref{problema} blow-up in finite time. Moreover (\cite[Theorem 2.5]{MP1}), if $m\leq p<\underline p$, where $\underline p<\bar p$ is a certain exponent depending on $N, m, q, k_1, k_2$, then solutions to problem \eqref{problema} blow-up in finite time for any nontrivial initial datum. For $k_1=k_2$, $\underline p=\bar p.$
\end{itemize}
Analogous results, proved by different methods, can be found also in \cite{MT, MTS}, where also more general double-nonlinear operators are treated.

\subsection{Outline of our results} Let us now describe our main results.
We distinguish between two cases: $q=2$ and $q>2$. First, assume that \eqref{hpSup} holds with $q=2$.

\begin{itemize}
\item (Theorem \ref{teosupersolutioncritical}). If
\[p>m,\]
$u_0$ has compact support and is small enough, then there exist global in time solutions to problem \eqref{problema}, which belong to $L^\infty(\mathbb R^N \times (0, +\infty))$;
\item (Theorem \ref{teosubsolutioncritical}). For any $p>m$, if $u_0$ is sufficiently large, then solutions to problem \eqref{problema} blow-up in finite time.
\end{itemize}
The proofs mainly relies on suitable comparison principles and properly constructed sub- and supersolutions, 
which crucially depend on the behavior at infinity of the inhomogeneity term $\rho(x)$. More precisely,  they are of the type
\begin{equation}\label{e4f}
w(x,t)=C\zeta(t)\left [ 1- \frac{\log(|x|+r_0)}{a} \eta(t) \right ]_{+}^{\frac{1}{m-1}}\quad \textrm{for any}\,\,\, (x,t)\in \big[\mathbb R^N\setminus B_1(0)\big]\times [0, T),
\end{equation}
for suitable functions $\zeta=\zeta(t), \eta=\eta(t)$ and constants $C>0, a>0$. The presence of $\log(|x|+r_0)$ in $w$ is strictly related to the assumption that $q=2$. Observe that the barriers used in \cite{MP1} for the case $0\leq q<2$, which are of power type in $|x|$, do not work in the present situation. Furthermore, note that the exponent $\bar p$ introduced in \cite{MP1} for $0\leq q<2$, when $q=2$ becomes $\bar p=m$. Hence Theorem  \ref{teosupersolutioncritical} can be seen as a generalization of \cite[Theorem 2.1]{MP1} to the case $q=2$.

Now, assume that $q>2$. We have the following results (see Theorem \ref{teosupersolutionsuper} and Remark \ref{oss1}).
\begin{itemize}
\item Let $1<p<m$. Then for suitable $u_0\in L^\infty(\R^N)$ there exist global in time solutions to problem \eqref{problema}. We do not assume that $u_0$ has compact support, but we need that it fulfills a decay condition as $|x|\to+\infty$. However, $u_0$ in a compact subset of $\R^N$ can be arbitrarily large. We cannot deduce that the corresponding solution belongs to  $L^\infty(\mathbb R^N \times (0, +\infty))$, but it is in $L^\infty(\mathbb R^N \times (0, \tau))$ for each $\tau>0.$
\item Let $p>m\geq 1$. Then for suitable $u_0\in L^\infty(\R^N)$, problem \eqref{problema} admits a solution in $L^\infty(\mathbb R^N\times (0, +\infty))$. We need that
\[0\leq u_0(x)\leq C W(x)\,\quad \textrm{for all }\,\, x\in \mathbb R^N,\]
where $C>0$ is small enough and $W(x)$ is a suitable function, which vanishes as $|x|\to +\infty$.
We should mention that, as recalled above, a similar result was been obtained in \cite[Theorem 2]{MT}, where also double-non linear operators are treated; see Remark \ref{ossMT} below.

\item Let $p=m> 1$. Then for suitable $u_0\in L^\infty(\R^N)$, problem \eqref{problema} admits a solution in $L^\infty(\mathbb R^N\times (0, +\infty))$, provided that $r_0>0$ in \eqref{hpSup} is big enough.
\end{itemize}
Such results are very different with respect to the cases $0\leq q<2$ and $q=2$. In fact, we do not have finite-time blow-up, but global existence prevails, for suitable initial data. The results follow by comparison principles, once we have constructed appropriate supersolutions, that have the form
\[w(x,t)=\zeta(t)W(x)\quad \textrm{for all }\,\, (x,t)\in \mathbb R^N\times (0, +\infty),\]
for suitable $\zeta(t)$ and $W(x)$. When $p\geq m$, $\zeta(t)\equiv 1$. Observe that we can also include the linear case $m=1$, whenever $p>m$. In this respect, our result complement the results in \cite{LX}, where only the case $q<2$ is addressed.  Finally, let us mention that it remains to be understood whether in the case $1<p<m$ solutions can blow-up in infinite time or not.

\section{Statements of the main results}

For any $x_0\in \mathbb R^N$ and $R>0$ we set
$$B_R(x_0)=\{x\in \R^N :  \| x-x_0 \| < R \}.$$
When $x_0=0$, we write $B_R\equiv B_R(0).$

\smallskip

For the sake of simplicity, sometimes  instead of  \eqref{hpSub}, we suppose that
\begin{equation}\label{hpSub}
\begin{aligned}
&\text{there exist} \,\, k_1, k_2\in (0, +\infty)\,\, \text{with}\,\, k_1\leq k_2\,, q\ge 2 \,\,, R>0\,\, \text{such that} \,\,\,\,\,\,\,\,\,\,\,\,\,\,\\
&\,\,\,\,\,\,k_1|x|^q \le\dfrac{1}{\rho(x)}\le k_2|x|^q\quad \text{for all}\,\,\, x\in \R^N \setminus B_{R}\,.
\end{aligned}
\end{equation}
In view of \eqref{hp}-(i),
\begin{equation}\label{rho}
\begin{aligned}
&\textrm{for any}\,\, R>0\, \text{there exist} \,\, \rho_1(R), \rho_2(R)\in (0, +\infty)\,\, \text{with}\,\, \rho_1(R)\leq \rho_2(R)\\& \,\,\,\,\,\, \text{such that}
\, \rho_1(R)\leq \frac1{\rho(x)}\leq \rho_2(R) \quad \textrm{for all}\,\,\, x\in \overline{B_{R}}\,.
\end{aligned}
\end{equation}
Obviously, \eqref{hpSup} is equivalent to \eqref{hpSub} and \eqref{rho}.

\medskip

In the sequel we shall refer to $q$ as the order of decaying of $\rho(x)$ as $|x|\to +\infty$.

\subsection{Order of decaying: $q=2$}
Let $q=2$. The first result concerns the global existence of solutions to problem \eqref{problema} for $p>m$. We assume that
\begin{equation}\label{hpC}
r_0 > e, \quad \frac{k_2}{k_1}<(N-2)(m-1)\frac{p-m}{p-1}\log r_0\,.
\end{equation}

\begin{theorem}\label{teosupersolutioncritical}
Assume \eqref{hp}, \eqref{hpSup} for $q=2$ and \eqref{hpC}. Suppose that $$p>m\,,$$
and that $u_0$ is small enough and has compact support. Then problem \eqref{problema} admits a global solution $u\in L^\infty(\mathbb R^N\times (0, +\infty))$. \newline
More precisely, if $C>0$ is small enough, $a>0$ is so that
$$
0<\omega_0\le\frac{C^{m-1}}{a}\le\omega_1
$$
for suitable $0<\omega_0<\omega_1$, $T>0$,

\begin{equation}\label{eq20}
u_0(x) \le CT^{-\frac{1}{p-1}} \left [ 1- \frac{\log(|x|+r_0)}{a} \, T^{-\frac{p-m}{p-1}} \right ]_{+}^{\frac{1}{m-1}} \quad \text{for any}\,\, x\in \R^N\,,
\end{equation}
then problem \eqref{problema} admits a global solution $u\in L^{\infty}(\R^N\times (0,+\infty))$. Moreover,
\begin{equation}\label{eq21}
u(x,t) \le C(T+t)^{-\frac{1}{p-1}} \left [ 1- \frac{\log(|x|+r_0)}{a} \,(T+t)^{-\frac{p-m}{p-1}} \right ]_{+}^{\frac{1}{m-1}} \, \text{for any}\,\, (x,t)\in \R^N \times (0,+\infty)\,.
\end{equation}
\end{theorem}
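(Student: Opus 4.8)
The plan is to prove the sharper assertion and deduce the qualitative one from it. I would show that the explicit function $w$ appearing in \eqref{eq21} is a supersolution of the differential inequality attached to \eqref{problema}, i.e.
\[
\rho(x)\,w_t \;\ge\; \Delta(w^m) + \rho(x)\,w^p \qquad\text{wherever } w>0,
\]
and then invoke the comparison principle, combined with an approximation on balls, to produce a global solution $u\le w$; boundedness of $w$ gives $u\in L^\infty(\R^N\times(0,+\infty))$ and \eqref{eq21}. The general statement (small, compactly supported $u_0$) reduces to the sharp one: condition \eqref{eq20} is exactly $u_0\le w(\cdot,0)$, and for any $u_0$ supported in a fixed ball and with $\|u_0\|_\infty$ small enough one can choose the free parameters $C,a,T$ (with $C^{m-1}/a$ in the prescribed range) so that \eqref{eq20} holds, since $\max_{T}\min_{B_\rho}w(\cdot,0)>0$ for each admissible $C,a$.

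For the core computation I write $w=C\,\zeta\, f^{1/(m-1)}$ with $\zeta(t)=(T+t)^{-1/(p-1)}$, $\eta(t)=(T+t)^{-(p-m)/(p-1)}$ and $f=1-\tfrac{\eta}{a}\log(|x|+r_0)$. The exponents are chosen so that the scaling identities $\zeta^{p}=\zeta\,(T+t)^{-1}$ and $\zeta^{m}\eta=\zeta\,(T+t)^{-1}$ hold, which forces $w^p$, $w_t$ and the leading part of $\tfrac1\rho\Delta(w^m)$ to scale identically. Using $\log(|x|+r_0)=\tfrac{a}{\eta}(1-f)$ one finds
\[
w_t=\frac{C\zeta}{(p-1)(T+t)}\Big[-f^{\frac{1}{m-1}}+\tfrac{p-m}{m-1}(1-f)\,f^{\frac{2-m}{m-1}}\Big].
\]
The Laplacian of $w^m=C^m\zeta^m f^{m/(m-1)}$, computed radially, has a positive ("bad") term proportional to $\tfrac{\eta^2}{a^2}f^{\frac{2-m}{m-1}}(r+r_0)^{-2}$ and a second term carrying $\Delta\log(|x|+r_0)$, which produces the crucial factor $-(N-2)$. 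Here the hypothesis $q=2$ is decisive: multiplying by $\tfrac1\rho$ and using \eqref{hpSup}, I bound the bad term from above with $k_2$ and the negative ("good") term from above with $k_1$ (exploiting $k_1(r+r_0)^2\le \tfrac1\rho\le k_2(r+r_0)^2$), so that the powers $(r+r_0)^2$ cancel exactly. Dividing the whole inequality by $C\zeta(T+t)^{-1}>0$ and factoring $f^{(2-m)/(m-1)}$ reduces the supersolution property, with $\lambda:=C^{m-1}/a$, to
\[
G(f):=\Big[\tfrac{p-m}{(p-1)(m-1)}-\tfrac{m k_2}{(m-1)^2}\tfrac{\eta}{a}\Big]
+\tfrac{\lambda m k_1(N-2)-1}{m-1}\,f-C^{p-1}f^{\frac{p+m-2}{m-1}}\ \ge\ 0
\]
for $0<f<1$.

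To close this I use the two elementary facts that on the support $0<f<1$ and $\tfrac{\eta}{a}\le\tfrac{1}{\log r_0}$ (immediate from $f\ge0$). Then the linear coefficient is nonnegative as soon as $\lambda\ge\tfrac{1}{m k_1(N-2)}=:\omega_0$, while the constant term is positive as soon as $\lambda<\tfrac{(m-1)(p-m)\log r_0}{m(p-1)k_2}=:\omega_1$; the last term is $\ge -C^{p-1}$ and is absorbed by taking $C$ small. The compatibility $\omega_0<\omega_1$ is precisely condition \eqref{hpC}, namely $\tfrac{k_2}{k_1}<(N-2)(m-1)\tfrac{p-m}{p-1}\log r_0$. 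This fixes the admissible window $0<\omega_0<\omega_1$ for $C^{m-1}/a$ announced in the statement.

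The main obstacle is this sign-bookkeeping: one must assign the two-sided density bounds to the correct terms so that the surviving $(N-2)$-contribution is favorable and the constants align with \eqref{hpC}; the role of $q=2$, through the logarithmic profile, is exactly to make these powers match. The remaining points are routine but must be addressed: across the free boundary $\{f=0\}$ the function $w^m\sim[f]_+^{m/(m-1)}$ is $C^1$ and $\Delta(w^m)$ is locally integrable with no adverse singular measure, and at the origin the mild, integrable singularity of $\Delta\log(|x|+r_0)$ contributes with the \emph{favorable} (negative) sign, so $w$ is a supersolution in the weak sense. Finally I would carry out the comparison on balls $B_n$ with zero lateral data, where $w\ge0=u_n$ on $\partial B_n$ gives $u_n\le w$ and hence uniform bounds preventing blow-up, and pass to the limit $n\to\infty$ to obtain the global solution $u\le w$, yielding \eqref{eq21}.
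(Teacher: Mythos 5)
Your proposal is correct and follows essentially the same path as the paper's proof: the same logarithmic supersolution $w$, the same assignment of $k_1$ to the favorable $(N-2)$-term and $k_2$ to the bad gradient term under $q=2$, the same reduction to nonnegativity of a one-variable expression in $f\in(0,1)$ with the admissible window for $C^{m-1}/a$ equivalent to \eqref{hpC} (the paper closes this step by concavity of $\varphi(F)$ and evaluation at the endpoints $F=0,1$ rather than by your coefficient-sign-plus-absorption argument, but the resulting parameter conditions coincide), the same $C^1$-matching and Kato-type treatment of the free boundary and of the origin, and a comparison principle to conclude. One typo: the constant term of your $G(f)$ should carry the factor $\lambda$, i.e.\ read $\tfrac{p-m}{(p-1)(m-1)}-\tfrac{mk_2\lambda}{(m-1)^2}\tfrac{\eta}{a}$; your stated threshold $\omega_1$ is precisely the one obtained from this corrected expression, and it matches the paper's condition \eqref{eq35}.
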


Observe that if $u_0$ satisfies \eqref{eq20}, then
\begin{equation*}
\operatorname{supp}u_0\subseteq \{x\in \mathbb R^N\,:\, \log(|x|+r_0)\leq a T^{\frac{p-m}{p-1}}\}\,.
\end{equation*}
From \eqref{eq21} we can infer that
\begin{equation}\label{eq26}
\operatorname{supp}u(\cdot, t)\subseteq \{x\in \mathbb R^N\,:\, \log(|x|+r_0)\leq a (T+t)^{\frac{p-m}{p-1}}\}\quad \textrm{for all } t>0\,.
\end{equation}

\medskip

The choice of the parameters $C>0, T>0$ and $a>0$ is discussed in Remark \ref{thmSuperC}.

\bigskip



The next result concerns the blow-up of solutions in finite time, for every $p>m>1$, provided that the initial datum is sufficiently large. We assume that hypothesis \eqref{hpSub} holds with the choice
\begin{equation}\label{hpCsub}
q=2\,, \quad R = e\,.
\end{equation}
So we fix, in assumption \eqref{rho},
$$\rho_1(R)=\rho_1(e)=:\rho_1\,, \quad \rho_2(R)=\rho_2(e)=:\rho_2\,.$$

Let
\[\mathfrak{s}(x):=\begin{cases}
\log(|x|)  &\quad \text{if}\quad  x\in \R^N\setminus B_e, \\
& \\
\dfrac{|x|^2+e^2}{2e^2} &\quad\text{if}\quad  x\in B_e\,.
\end{cases}\]


\begin{theorem}\label{teosubsolutioncritical}

Let assumption \eqref{hp}, \eqref{hpSub} and \eqref{hpCsub}. For any $$p>m$$ and for any $T>0$, if the initial datum $u_0$ is large enough, then the solution $u$ of problem \eqref{problema} blows-up in a finite time $S\in (0,T]$, in the sense that
\begin{equation}\label{eq22}
\|u(t)\|_{\infty} \to \infty \text{ as } t \to S^{-}\,.
\end{equation}
More precisely, if $C>0$ and $a>0$ are large enough, $T>0$,
\begin{equation}\label{eq23}
u_0(x)\ge CT^{-\frac{1}{p-1}}\left[1-\frac{\mathfrak{s}(x)}{a}\,T^{\frac{m-p}{p-1}}\right]^{\frac{1}{m-1}}_{+}\,, \quad \text{for any}\,\, x\in \R^N\,,
\end{equation}
then the solution $u$ of problem \eqref{problema} blows-up and satisfies the bound from below
\begin{equation} \label{eq24}
u(x,t) \ge C (T-t)^{-\frac{1}{p-1}}\left [1- \frac{\mathfrak{s}(x)}{a}\, (T-t)^{\frac{m-p}{p-1}} \right ]_{+}^{\frac{1}{m-1}}, \,\, \text{for any}\,\, (x,t) \in \R^N\times(0,S)\,.
\end{equation}
\end{theorem}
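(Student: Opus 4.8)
The plan is to exhibit the right--hand side of \eqref{eq24} as a subsolution of \eqref{problema} and then invoke the comparison principle. Write the candidate barrier as $w=C\,\zeta\,F^{\frac1{m-1}}$, with $\zeta(t)=(T-t)^{-\frac1{p-1}}$, $\eta(t)=(T-t)^{\frac{m-p}{p-1}}$ and $F=\big[1-\tfrac{\mathfrak s(x)}{a}\,\eta(t)\big]_{+}$; this is precisely the function in \eqref{eq24}, and at $t=0$ it reduces to the profile in \eqref{eq23}, so the hypothesis on $u_{0}$ gives $w(\cdot,0)\le u_{0}$. Since the equation may be written as $u_{t}=\tfrac1{\rho}\Delta(u^{m})+u^{p}$, the task is to verify $w_{t}\le\tfrac1{\rho}\Delta(w^{m})+w^{p}$ on the open set $\{w>0\}$.

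The heart of the argument is this differential inequality. Differentiating and using $\tfrac{m}{m-1}=1+\tfrac1{m-1}$, one rewrites $\tfrac1{\rho}\Delta(w^{m})+w^{p}-w_{t}\ge 0$ as a combination of three terms: a term $A\,F^{\frac1{m-1}-1}$ with strictly positive coefficient (produced by $|\nabla\mathfrak s|^{2}$ and by $-F_{t}$), a term $B\,F^{\frac1{m-1}}$ with negative coefficient (produced by $\Delta\mathfrak s$ and by $\zeta'$), and the reaction term $C^{p}\zeta^{p}F^{\frac{p}{m-1}}$. The logarithmic profile $\mathfrak s$ is the correct choice exactly because $q=2$: for $|x|\ge e$ one has $\mathfrak s=\log|x|$, $|\nabla\mathfrak s|^{2}=|x|^{-2}$ and $\Delta\mathfrak s=(N-2)|x|^{-2}$, so that by \eqref{hpSub} both $\tfrac1{\rho}|\nabla\mathfrak s|^{2}$ and $\tfrac1{\rho}\Delta\mathfrak s$ lie between positive constants, while for $|x|\le e$ the parabolic branch gives $\nabla\mathfrak s=x/e^{2}$, $\Delta\mathfrak s=N/e^{2}$. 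A bookkeeping of time weights shows that $B$ and the reaction coefficient both scale like $(T-t)^{-\frac{p}{p-1}}$, whereas $A$ carries the heavier weight $(T-t)^{-\frac{2p-m}{p-1}}$. Consequently, near $\{F=0\}$ the positive term dominates, and on the bulk $\{F\sim1\}$ the reaction overwhelms the negative term, since the ratio $C^{p}\zeta^{p}/|B|$ is of order $C^{p-m}$ against the $\Delta\mathfrak s$-part and of order $C^{p-1}$ against the $\zeta'$-part; taking $C$ and $a$ large therefore makes the expression nonnegative uniformly in $x$ and in $t\in[0,T)$. In contrast with the supersolution of Theorem~\ref{teosupersolutioncritical}, no smallness of $k_{2}/k_{1}$ is required---this is why \eqref{hpC} does not appear among the hypotheses---since all the slack is provided by the size of $C$.

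Two matching points must then be settled. At the free boundary $\partial\{w>0\}$ the exponent $\tfrac1{m-1}$ makes $w^{m}$ of class $C^{1}$ with vanishing flux, so that $w$ is a subsolution in the weak sense across $\{F=0\}$ and no positive singular part is created; this is where positivity of the coefficient $A$ is essential. Across the sphere $|x|=e$, where the two branches of $\mathfrak s$ meet, $\mathfrak s\in C^{1}$, hence $\nabla(w^{m})$ is continuous and $\Delta(w^{m})$ carries no Dirac mass, only a bounded jump of $\Delta\mathfrak s$ which is harmless for an almost-everywhere inequality. Thus $w$ is a weak subsolution of \eqref{problema} on $\mathbb R^{N}\times[0,T)$ with $w(\cdot,0)\le u_{0}$, and the comparison principle gives $u\ge w$ on $\mathbb R^{N}\times(0,\min\{S,T\})$, where $S$ denotes the maximal existence time of $u$; on this set this is exactly \eqref{eq24}.

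Finally one must convert the lower bound into blow-up no later than $T$. The amplitude $C(T-t)^{-\frac1{p-1}}$ of $w$ diverges as $t\uparrow T$, and for $C,a$ large \eqref{eq24} forces $u$ to exceed an arbitrarily large constant on a fixed ball at a time $t_{*}<T$ with $T-t_{*}$ small. On that ball, where $u$ is already large, one closes the argument by comparison with the purely reactive dynamics $V'=V^{p}$ (equivalently, by a Kaplan functional $\int u\,\varphi\,\rho\,dx$), whose blow-up time is of order $(\,\text{size of }u\,)^{1-p}$ and hence shorter than $T-t_{*}$ when $C$ is large. This yields $S\in(0,T]$ and $\|u(t)\|_{\infty}\to\infty$ as $t\to S^{-}$, that is \eqref{eq22}. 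I expect the principal difficulty to be the differential inequality of the second paragraph: controlling simultaneously the competing powers of $F$ and of $(T-t)$, and pinning down how large $C$ and $a$ must be so that the negative $\Delta\mathfrak s$- and $\zeta'$-contributions are dominated uniformly---precisely the point at which the logarithmic scaling imposed by $q=2$ modifies the estimates used for the power barriers of the case $q<2$.
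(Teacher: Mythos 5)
Your construction is the paper's own: the same two--branch barrier $w=C\zeta(t)\big[1-\tfrac{\mathfrak{s}(x)}{a}\eta(t)\big]_+^{1/(m-1)}$ with $\zeta=(T-t)^{-1/(p-1)}$, $\eta=(T-t)^{-(p-m)/(p-1)}$, the same $C^1$--matching of $w^m$ across $\partial B_e$ and across the free boundary (the paper's Lemma \ref{lemext}), and the same comparison result (Proposition \ref{cpsub}, whose compact--support hypothesis your barrier satisfies). For the core inequality the paper does not argue by regimes: it cancels the common weight $(T-t)^{-p/(p-1)}$, reduces to $\varphi(F)=\underline{\sigma}F-\underline{\delta}-\underline{\gamma}F^{\frac{p+m-2}{m-1}}\le 0$ on $(0,1)$, and maximizes the concave $\varphi$, which yields the explicit conditions \eqref{eq40}--\eqref{eq43}, arranged in Remark \ref{thmSubC} and Lemma \ref{lemmaCsub} by keeping $\omega=C^{m-1}/a$ in a fixed window while $C\to\infty$. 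Two points in your sketch need this correction: (i) only the $|\nabla\mathfrak{s}|^2$ part of your coefficient $A$ carries the weight $(T-t)^{-(2p-m)/(p-1)}$; the part coming from $-F_t$ (the only one the paper retains, and the only one that survives at the centre, where $\nabla\mathfrak{s}=0$) has the same weight as all the other terms, so nothing is gained from the time weights and the inequality is genuinely one among constants; (ii) ``$C$ and $a$ large'' is not enough: with $a$ fixed and $C\to\infty$ condition \eqref{eq62} fails, so the coupling ($\omega$ bounded) is essential, exactly the point you flagged as the main difficulty.

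The genuine divergence is your final paragraph, and it works in your favour. The paper passes from \eqref{eq24} to the blow--up statement \eqref{eq22} with no further argument (``the thesis follows''), but \eqref{eq24} alone cannot yield \eqref{eq22}: since $\mathfrak{s}\ge 1/2$ everywhere, the right--hand side of \eqref{eq24} is bounded by $\mathrm{const}\cdot C\,a^{1/(p-m)}$ uniformly in $(x,t)$ and vanishes identically as soon as $a(T-t)^{(p-m)/(p-1)}<1/2$; unlike the power--type barriers of \cite{MP1} for $q<2$, which equal $C\zeta(t)$ at the origin and hence themselves blow up there, this subsolution is globally bounded. A supplementary argument of exactly the kind you propose is therefore needed: at $t_*$ with $T-t_*=a^{-(p-1)/(p-m)}$ one gets $u\ge \mathrm{const}\cdot C\,a^{1/(p-m)}$ on the fixed ball $\{\mathfrak{s}\le 3/4\}=B_{e/\sqrt{2}}$, and a Kaplan functional there (weighted Dirichlet eigenfunction, Jensen, the term $\lambda_1\int\rho u^m\varphi$ absorbed because $p>m$) blows up within a time of order $C^{1-p}a^{-(p-1)/(p-m)}<T-t_*$ once $C^{p-1}$ is large. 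Only your parenthetical equivalence is off: the bare ODE comparison $V'=V^p$ is not valid on the ball, since the diffusion term has no sign there; it is the Kaplan form that closes the argument. In short, your proposal reproduces the paper's construction and comparison, and in addition supplies a conversion of the lower bound into blow--up that the paper's own proof leaves unjustified.
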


\medskip
Observe that if $u_0$ satisfies \eqref{eq23}, then
\begin{equation*}
\operatorname{supp}u_0\supseteq \{x\in \mathbb R^N\,:\, \mathfrak{s}(x)< a T^{\frac{p-m}{p-1}}\}\,.
\end{equation*}
From \eqref{eq24} we can infer that
\begin{equation}\label{eq25}
\operatorname{supp}u(\cdot, t)\supseteq \{x\in \mathbb R^N\,:\, \mathfrak{s}(x) < a (T-t)^{\frac{p-m}{p-1}}\}\quad \textrm{for all } t\in[0,S)\,.
\end{equation}
\medskip
The choice of the parameters $C>0, T>0$ and $a>0$ is discussed in Remark \ref{thmSubC}.

\subsection{Order of decaying: $q>2$}
Let $q>2$. The first result concerns the global existence of solutions to problem \eqref{problema} for any $p>1$ and $m>1$, $p\neq m$. Let us introduce the parameter $\bar b\in \R$ such that
\begin{equation}\label{hpS}
0<\bar b<\min \{N-2\,,\,\,q-2\}\,.
\end{equation}
Moreover, we can find $\bar c>0$ such that
\begin{equation}\label{eq115}
(r+r_0)^{-\frac{\bar bp}{m}}\le \bar c \quad \text{for any}\,\, r\ge 0\,,
\end{equation}
with $r_0>0$ as in hypothesis \eqref{hpSup}.

\begin{theorem}\label{teosupersolutionsuper}
Let assumptions \eqref{hp}, \eqref{hpSup} and \eqref{hpS} be satisfied with $q>2$. Suppose that $$1<p<m\,,\quad \text{or}\,\,\, p> m\geq 1\,,$$
and that $u_0$ is small enough. Then problem \eqref{problema} admits a global solution $u\in L^\infty(\mathbb R^N\times (0, \tau))$ for any $\tau>0$.
More precisely, we have the following cases.
\begin{itemize}
\item[(a)]\, Let $1<p<m$. If $C>0$ is big enough, $r_0>0$, $T>1$, $\alpha> 0$,
\begin{equation}\label{eq100}
u_0(x) \le CT^{\alpha} \left (|x|+r_0 \right )^{-\frac{\bar b}{m}} \quad \text{for any}\,\, x\in \R^N\,,
\end{equation}
then problem \eqref{problema} admits a global solution $u$, which satisfies the bound from above
\begin{equation}\label{eq101}
u(x,t) \le C(T+t)^{\alpha} \left (|x|+r_0 \right )^{-\frac{\bar b}{m}} \, \text{for any}\,\, (x,t)\in \R^N \times (0,+\infty)\,.
\end{equation}
\item[(b)]\, Let $p> m \geq  1$. If $C>0$ is small enough, $r_0>0$, $T>0$ and \eqref{eq100} holds with $\alpha=0$, then problem \eqref{problema} admits a global solution $u\in L^{\infty}(\R^N\times (0,+\infty))$, which satisfies the bound from above \eqref{eq101} with $\alpha=0$.
\end{itemize}
\end{theorem}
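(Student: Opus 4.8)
The plan is to exhibit in each regime an explicit supersolution of the separated form $w(x,t)=\zeta(t)W(x)$ with $W(x)=(|x|+r_0)^{-\bar b/m}$, and then to conclude by comparison. Concretely I would take $\zeta(t)=C(T+t)^{\alpha}$ in case (a) and $\zeta(t)\equiv C$ in case (b), so that $w$ is exactly the right-hand side of \eqref{eq101}; by \eqref{eq100} one has $u_0\le w(\cdot,0)$ at the initial time. Global existence together with the bound \eqref{eq101} then follows once $w$ is shown to be a supersolution of the equation in \eqref{problema}: I would construct $u$ as the monotone limit of the solutions $u_n$ of the problems on the balls $B_n$ with homogeneous Dirichlet data and initial datum $u_0$, apply the comparison principle on each $B_n$ to get $u_n\le w$ (since $w\ge 0=u_n$ on $\partial B_n$ and $w(\cdot,0)\ge u_0$), and pass to the limit. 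The uniform bound $u_n\le w$ is precisely what rules out blow-up and yields global existence in $L^\infty(\R^N\times(0,\tau))$ for every $\tau$, and in $L^\infty(\R^N\times(0,+\infty))$ when $\alpha=0$.

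The core is the differential inequality $w_t\ge \tfrac1{\rho}\Delta(w^m)+w^p$. Writing $g(|x|)=(|x|+r_0)^{-\bar b}$ so that $w^m=\zeta^m g$, a direct radial computation gives, for $x\neq 0$,
\[
\Delta(w^m)=\zeta^m\,\bar b\,(|x|+r_0)^{-\bar b-2}\Big[\bar b+2-N-\tfrac{(N-1)r_0}{|x|}\Big].
\]
Since \eqref{hpS} forces $\bar b<N-2$, the bracket is strictly negative for every $|x|>0$, whence $\Delta(w^m)\le 0$, and using the lower density bound in \eqref{hpSup},
\[
-\frac1{\rho(x)}\Delta(w^m)\ \ge\ \zeta^m\,k_1\,\bar b\,(N-2-\bar b)\,(|x|+r_0)^{\,q-\bar b-2}.
\]
The decisive structural fact is that the second requirement in \eqref{hpS}, $\bar b<q-2$, makes the exponent $q-\bar b-2$ strictly positive, so the diffusion contribution \emph{grows} at infinity, whereas $w^p=\zeta^p(|x|+r_0)^{-\bar b p/m}$ decays; comparing exponents shows that the diffusion term dominates the reaction term throughout $\R^N$. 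This is the mechanism by which, for $q>2$, global existence prevails and no finite-time blow-up can be forced.

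It then remains to absorb $w^p$ into the favourable terms. In case (b) one has $w_t=0$, so it suffices to check $-\tfrac1\rho\Delta(w^m)\ge w^p$; by the two displays this reduces to
\[
C^{\,m-p}\,k_1\,\bar b\,(N-2-\bar b)\ \ge\ (|x|+r_0)^{\,\bar b\frac{m-p}{m}+2-q},
\]
and since $p>m$ and $\bar b<q-2$ the exponent on the right is negative, so the right-hand side is bounded uniformly in $x$ with supremum at $x=0$ (this boundedness of negative powers of $|x|+r_0$ is the estimate recorded in \eqref{eq115}); as $m-p<0$, taking $C>0$ \emph{small} makes $C^{m-p}$ large and closes the inequality. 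In case (a) the extra term $w_t=C\alpha(T+t)^{\alpha-1}W\ge 0$ only helps, so it again suffices to dominate $w^p$ by the diffusion term; the reduction now reads $C^{\,m-p}k_1\bar b(N-2-\bar b)(T+t)^{(m-p)\alpha}(|x|+r_0)^{\,q-2-\bar b(m-p)/m}\ge 1$, where both exponents are \emph{positive}, so the infimum over $(x,t)$ is attained at $x=0,\ t=0$ and equals a constant times $C^{m-p}T^{(m-p)\alpha}$; since here $m-p>0$, taking $C>0$ \emph{large} closes it. The parameters $\alpha>0$ and $T>1$ serve only to enlarge the admissible class of initial data in \eqref{eq100}, the solution being then allowed to grow in time.

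The main obstacle I anticipate is twofold. First, one must make the domination of reaction by diffusion \emph{uniform} in $x$ and $t$ simultaneously: the computation above shows this is possible precisely because the relevant spatial and temporal exponents have a definite sign, guaranteed by $\bar b<\min\{N-2,q-2\}$ together with the sign of $m-p$, and the delicate point is to verify that the binding point is $x=0$ (and $t=0$ in case (a)) rather than infinity. Second, the ansatz $W$ has a gradient corner at the origin, so $w$ is only Lipschitz there; I would check that for $N\ge 3$ the flux of $\nabla(w^m)$ through the small spheres $\partial B_\varepsilon$ tends to $0$, so that the distributional $\Delta(w^m)$ carries no singular mass at $x=0$ and the pointwise supersolution inequality, valid for $x\neq 0$, upgrades to a weak supersolution inequality on all of $\R^N$, making the comparison principle applicable.
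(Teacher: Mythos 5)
Your proposal is correct and follows essentially the same route as the paper: the identical separated barrier $\zeta(t)(|x|+r_0)^{-\bar b/m}$ with $\zeta(t)=C(T+t)^{\alpha}$, $C$ large, $T>1$ in case (a) and $\zeta\equiv C$, $C$ small in case (b), the same domination of the reaction term by the diffusion term via $\bar b<\min\{N-2,q-2\}$ and the lower density bound in \eqref{hpSup}, the same handling of the Lipschitz corner at the origin (the paper's appeal to a Kato-type inequality with $\bar u^m_r(0,t)\le 0$ is exactly your vanishing-flux argument), and the same conclusion by comparison with the minimal solution built by exhaustion on balls (Propositions \ref{prop1} and \ref{cpsup}, whose proofs are the approximation scheme you describe). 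The only difference is bookkeeping: you keep the spatial factors together as a single power with definite-sign exponent, whereas the paper bounds $(r+r_0)^{q-\bar b-2}\ge 1$ and $(r+r_0)^{-\bar b p/m}\le \bar c$ separately (see \eqref{eq121} and \eqref{eq115}) before imposing \eqref{eq114}, which is marginally less sharp but leads to the same parameter conditions.
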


\begin{remark}\label{oss1}
Observe that, in the case when $p=m$, if $C>0$ is small enough, $r_0>0$ big enough to have
$$
\left (\frac{1}{r_0} \right )^{\frac{\bar bp}{m}} \le \bar bk_1(N-2-\bar b)\,,
$$
$T>0$ and \eqref{eq100} holds with $\alpha=0$, then problem \eqref{problema} admits a global solution $u\in L^{\infty}(\R^N\times (0,+\infty))$ which satisfies the bound from above \eqref{eq101} for $\alpha=0$.
\end{remark}

\medskip

Note that in Theorem \ref{teosupersolutionsuper} we do not require that $\operatorname{supp}\, u_0$ is compact.
\medskip

The choice of the parameters $C>0, T>0$ and $a>0$ is discussed in Remark \ref{thmSuperS}.

\bigskip

\begin{remark}\label{ossMT} The statement in Theorem \ref{teosupersolutionsuper}-(b) is in agreement with \cite[Theorem 2]{MT}, where it is assumed that $p>m$, $\rho(x)=(1+|x|)^{-q}$ with $q>2$, 
$\int_{\mathbb R^N} \rho(x) u_0(x) dx <+\infty, \int_{\mathbb R^N} \rho(x)[u_0(x)]^{\bar q} dx<\delta$, for some $\delta>0$ small enough and $\bar q> \frac N 2 (p-m)$. 

Note that the assumption on $u_0$ is of a different type. In particular, in view of \eqref{eq100} and \eqref{hpS}, the initial datum $u_0$ considered in Theorem \ref{teosupersolutionsuper}-(b) not necessarily satisfies 
$\int_{\mathbb R^N} \rho(x) u_0(x) dx <+\infty.$

 In \cite{MT} the proofs are based on the energy method, so they are completely different with respect to our approach.
\end{remark}

\section{Preliminaries}

In this section we give the precise definitions of solutions of all problems we address, then we state a local in time existence result for problem \eqref{problema}. Moreover, we recall some useful comparison principles. The proofs of such auxiliary results can be found in \cite[Section 3]{MP1}.

\medskip

Throughout the paper we deal with {\em very weak} solutions to problem \eqref{problema} and to the same problem set in different domains, according to the following definitions.

\begin{definition}
Let $u_0\in L^{\infty}(\R^N)$ with $u_0\ge0$. Let $\tau>0$, $p>1, m>1$. We say that a nonnegative function $u\in L^{\infty}(\R^N\times (0,S))$ for any $ S<\tau$ is a solution of problem \eqref{problema} if
\begin{equation}
\begin{aligned}
-\int_{\R^N}^{}\int_{0}^{\tau} \rho(x) u \varphi_t \,dt\,dx &= \int_{\R^N} \rho(x) u_0(x) \varphi(x,0) \,dx \\ &+ \int_{\R^N}^{}\int_{0}^{\tau}  u^m \Delta \varphi \,dt\,dx \\ &+ \int_{\R^N}^{}\int_{0}^{\tau} \rho(x) u^p \varphi \,dt\,dx
\end{aligned}
\label{veryweak}
\end{equation}
for any $\varphi \in C_c^{\infty}(\R^N \times [0,\tau)), \varphi \ge 0.$ Moreover, we say that a nonnegative function $u\in L^{\infty}(\R^N\times (0,S))$ for any $ S<\tau$ is a subsolution (supersolution) if it satisfies \eqref{veryweak} with the inequality $"\le"$ ($"\ge"$) instead of $"="$ with $\varphi\geq 0$.
\label{soluzioneveryweak}
\end{definition}

For every $ R>0$, we consider the auxiliary problem
\begin{equation}
\begin{cases}
 u_t=\frac 1{\rho(x)}\Delta(u^m) +u^p & \text{ in } B_R\times(0,\tau) \\
u=0  &  \text{ on } \partial B_R \times(0,\tau)\\
u=u_0 &  \text{ in } B_R\times\{0\}\,.\\
\end{cases}
\label{problemalocale}
\end{equation}

\begin{definition}
Let $u_0\in L^{\infty}(B_R)$ with $u_0\ge0$. Let $\tau>0$, $p>1, m>1$. We say that a nonnegative function $u\in L^{\infty}(B_R\times (0,S))$ for any $S<\tau$ is a solution of problem \eqref{problemalocale} if
\begin{equation}
\begin{aligned}
-\int_{B_R}^{}\int_{0}^{\tau} \rho(x) u\, \varphi_t \,dt\,dx &= \int_{B_R} \rho(x) u_0(x) \varphi(x,0) \,dx \\ &+ \int_{B_R}^{}\int_{0}^{\tau} u^m \Delta \varphi \,dt\,dx \\ &+ \int_{B_R}^{}\int_{0}^{\tau} \rho(x) u^p \varphi \,dt\,dx
\end{aligned}
\label{veryweaklocale}
\end{equation}
for any $\varphi \in C_c^{\infty}(\overline{B_R} \times [0,\tau))$ with $\varphi| _{\partial B_R}=0$ for all $t\in [0,\tau)$. Moreover, we say that a nonnegative function $u\in L^{\infty}(B_R\times (0,S))$ for any $S<\tau$ is a subsolution (supersolution) if it satisfies \eqref{veryweaklocale} with the inequality $"\le"$ ($"\ge"$) instead of $"="$, with $\varphi\geq 0$.
\label{soluzioneveryweaklocale}
\end{definition}

\begin{proposition}\label{exiloc}
Let hypothesis \eqref{hp} be satisfied. Then there exists a solution $u$ to problem \eqref{problemalocale} with
$$\tau\geq \tau_R:=\frac{1}{(p-1)\|u_0\|_{L^\infty(B_R)}^{p-1}}.$$
\end{proposition}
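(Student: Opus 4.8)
The plan is to reduce the explicit lifespan $\tau_R$ to a flat (spatially constant) supersolution governed by the reaction ODE, and then to produce an actual solution by a regularization-and-comparison scheme that copes with the degeneracy of the diffusion $\Delta(u^m)$.

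First I would introduce the candidate supersolution. Set $M:=\|u_0\|_{L^\infty(B_R)}$ and let $\overline u$ solve $\overline u'=\overline u^{\,p}$, $\overline u(0)=M$, explicitly
\[
\overline u(t)=\bigl(M^{-(p-1)}-(p-1)t\bigr)^{-\frac1{p-1}},
\]
which is finite and increasing on $[0,\tau_R)$ and blows up exactly at $\tau_R$. Since $\overline u$ is independent of $x$, one has $\Delta(\overline u^{\,m})\equiv 0$, so $\overline u_t=\tfrac1{\rho}\Delta(\overline u^{\,m})+\overline u^{\,p}$; moreover $\overline u(0)=M\ge u_0$ in $B_R$ and $\overline u\ge 0=u$ on $\partial B_R$. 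Thus $\overline u$ is a supersolution of \eqref{problemalocale} dominating the initial and boundary data on every interval $[0,S]$ with $S<\tau_R$.

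Next I would construct solutions. Because the diffusivity $m u^{m-1}$ degenerates where $u=0$ (in particular near $\partial B_R$), I would pass through non-degenerate approximations: replace $u_0$ by smooth data $u_{0n}$ with $\tfrac1n\le u_{0n}\le M+\tfrac1n$ and $u_{0n}\to u_0$ in a suitable sense, and regularize the nonlinearity so that the diffusion coefficient is bounded below by a positive constant. Each regularized problem is uniformly parabolic, so classical quasilinear theory yields a smooth positive solution $u_n$ on a short time interval. The classical comparison principle against the flat supersolution with datum $M+\tfrac1n$ then gives $0\le u_n\le \overline u_n$, where $\overline u_n$ blows up at $\tfrac1{(p-1)(M+1/n)^{p-1}}$; since this time converges up to $\tau_R$, for every $S<\tau_R$ and $n$ large the bound $\overline u_n$ is finite on $[0,S]$, the $u_n$ are equibounded on $B_R\times[0,S]$, and the local solutions continue up to any such $S$. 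This is precisely where the explicit value $\tau_R$ enters, as the blow-up time of the dominating ODE.

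Finally I would pass to the limit. The uniform $L^\infty$ bound on $[0,S]$, together with standard interior and up-to-the-boundary parabolic estimates for the regularized problems, provides enough compactness to extract a limit $u$ that is a very weak solution in the sense of Definition \ref{soluzioneveryweaklocale} on $B_R\times(0,S)$; exhausting $S\uparrow\tau_R$ yields a solution with $\tau\ge\tau_R$, and the bound $0\le u\le\overline u$ survives in the limit. The main obstacle is not the explicit lifespan, which is immediate from the ODE supersolution, but controlling the degeneracy of the porous-medium diffusion when $n\to\infty$, i.e. securing enough equicontinuity of the $u_n$ (and of the $u_n^m$) to justify passing the weak formulation \eqref{veryweaklocale} to the limit; this is the standard technical core, and is exactly what is carried out in \cite[Section 3]{MP1}.
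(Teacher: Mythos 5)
Your proposal is correct and takes essentially the same route as the paper, which defers this proposition to \cite[Section 3]{MP1}: there the lifespan $\tau_R$ is likewise dictated by the spatially flat ODE supersolution $\bigl(M^{-(p-1)}-(p-1)t\bigr)^{-\frac{1}{p-1}}$, and existence is obtained by solving non-degenerate regularized problems, comparing them with this barrier, and passing to the limit in the very weak formulation. The one point to keep in mind is that the compactness estimates you invoke must be uniform in $n$ (classical Schauder bounds for the regularized problems degenerate as $n\to\infty$), which is exactly the technical core you correctly identify and defer to \cite{MP1}.
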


Moreover, the following comparison principle for problem \eqref{problemalocale} holds (see \cite{ACP} for the proof).
\begin{proposition}
Let assumption \eqref{hp} hold.
If $u$ is a subsolution of problem \eqref{problemalocale} and $v$ is a supersolution of \eqref{problemalocale}, then
$$u\le v \quad  \textrm{a.e. in } \, B_R \times (0,\tau).$$
\label{confrontolocale}
\end{proposition}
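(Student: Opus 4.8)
The plan is to use the duality (adjoint) method of Aronson--Crandall--Peletier for degenerate parabolic equations, which is exactly the technique behind the cited reference. The goal is to prove that $\int_{B_R}\!\int_0^\tau \rho\,(u-v)\,\chi\,dt\,dx\le 0$ for every nonnegative $\chi\in C_c^\infty(B_R\times(0,\tau))$, which forces $w:=u-v\le 0$ a.e. Subtracting the supersolution inequality for $v$ from the subsolution inequality for $u$ in the weak formulation \eqref{veryweaklocale}, and linearizing the nonlinear terms by writing
\[
u^m-v^m=a\,w,\qquad u^p-v^p=b\,w,
\]
with $a:=(u^m-v^m)/(u-v)$ and $b:=(u^p-v^p)/(u-v)$ (set equal to $0$ where $u=v$), one obtains, for every admissible $\varphi\ge 0$,
\[
\int_{B_R}\!\!\int_0^\tau \big(\rho\,\varphi_t+a\,\Delta\varphi+\rho\,b\,\varphi\big)\,w\,dt\,dx\;\ge\;-\int_{B_R}\rho\,(u_0^{\mathrm{sub}}-u_0^{\mathrm{sup}})\,\varphi(\cdot,0)\,dx .
\]
Since $u,v\in L^\infty$ take values in some $[0,M]$, the mean value theorem gives $0\le a\le mM^{m-1}$ and $0\le b\le pM^{p-1}$; crucially $a$ vanishes only on the set $\{u=v=0\}$. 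The right-hand side above is nonnegative once $\varphi\ge 0$, because the subsolution datum lies below the supersolution datum (and it vanishes when the data coincide).

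The idea is then to choose $\varphi$ as the solution of the backward dual problem
\[
\rho\,\varphi_t+a\,\Delta\varphi+\rho\,b\,\varphi=-\rho\,\chi \ \text{ in } B_R\times(0,\tau),\quad \varphi=0 \ \text{ on } \partial B_R,\quad \varphi(\cdot,\tau)=0,
\]
so that the left-hand side collapses to $-\int_{B_R}\!\int_0^\tau\rho\,w\,\chi$, yielding the desired inequality. The obstruction is that $a$ degenerates on $\{u=v=0\}$, so this dual problem is not uniformly parabolic and need not admit a smooth solution. I would therefore regularize: replace $a$ by a smooth coefficient $a_\varepsilon$ with $\varepsilon\le a_\varepsilon$ and $a\le a_\varepsilon\le a+\varepsilon$ (mollifying after the shift), smoothing $\rho,b,\chi$ as needed. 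For fixed $\varepsilon$ the regularized dual problem is a nondegenerate linear parabolic equation; reversing time and invoking standard linear parabolic theory produces a classical solution $\varphi_\varepsilon$, which is an admissible test function in \eqref{veryweaklocale}. A comparison/maximum-principle argument for the dual problem gives the uniform bound $0\le\varphi_\varepsilon\le C$, where $C$ depends on $\tau$, $\|b\|_\infty$, $\|\chi\|_\infty$ but not on $\varepsilon$.

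Feeding $\varphi_\varepsilon$ into the inequality reproduces $-\int_{B_R}\!\int_0^\tau\rho\,w\,\chi$ up to the error $\int_{B_R}\!\int_0^\tau (a-a_\varepsilon)\,w\,\Delta\varphi_\varepsilon$, and the heart of the matter is to show this error vanishes as $\varepsilon\to0$. This rests on the uniform energy estimate $\int_{B_R}\!\int_0^\tau a_\varepsilon\,(\Delta\varphi_\varepsilon)^2\le C$ for the dual solution; granted it, a weighted Cauchy--Schwarz inequality gives
\[
\Big|\int_{B_R}\!\!\int_0^\tau (a-a_\varepsilon)\,w\,\Delta\varphi_\varepsilon\Big|\le \Big(\int_{B_R}\!\!\int_0^\tau \frac{(a-a_\varepsilon)^2}{a_\varepsilon}\,w^2\Big)^{1/2}\Big(\int_{B_R}\!\!\int_0^\tau a_\varepsilon\,(\Delta\varphi_\varepsilon)^2\Big)^{1/2},
\]
and the choice $a\le a_\varepsilon\le a+\varepsilon$ forces $(a-a_\varepsilon)^2/a_\varepsilon\le\varepsilon$, so the first factor is $O(\sqrt{\varepsilon}\,)$ since $w\in L^\infty$. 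Passing to the limit yields $\int_{B_R}\!\int_0^\tau\rho\,w\,\chi\le 0$ for every nonnegative $\chi$, whence $u\le v$ a.e. in $B_R\times(0,\tau)$.

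The step I expect to be the main obstacle is the \emph{uniform} energy bound $\int_{B_R}\!\int_0^\tau a_\varepsilon\,(\Delta\varphi_\varepsilon)^2\le C$ for the regularized dual problem: it is precisely here that the degeneracy must be controlled, and obtaining a constant independent of $\varepsilon$ (rather than one that blows up as $a_\varepsilon\downarrow a$) is delicate, especially because $\rho$ is only continuous, so one must work with the uniform bounds $\rho_1(R)\le 1/\rho\le\rho_2(R)$ from \eqref{rho} and treat the zeroth-order term $\rho\,b\,\varphi_\varepsilon$ and the terminal/boundary data carefully. Everything else is either routine bookkeeping in the weak formulation or standard linear parabolic regularity for the fixed-$\varepsilon$ problem.
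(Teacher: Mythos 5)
The paper itself gives no proof of this proposition --- it only cites \cite{ACP} --- and the duality scheme you outline is indeed the technique of that reference, so your overall strategy is the right one. However, there is a genuine gap, and it sits exactly where you yourself place it: the uniform energy bound $\int_{B_R}\int_0^\tau a_\varepsilon(\Delta\varphi_\varepsilon)^2\,dx\,dt\le C$ is not just ``delicate'' but is unobtainable in the form you need as long as the zeroth-order coefficient $b$ is kept inside the dual equation. Multiply your regularized dual equation by $\Delta\varphi_\varepsilon$ and integrate: the time term has a favourable sign (since $\varphi_\varepsilon(\cdot,\tau)=0$ and $\varphi_\varepsilon=0$ on $\partial B_R\times(0,\tau)$), the source term is harmless (integrate by parts twice to put $\Delta$ on $\chi$, which is smooth and compactly supported, and use $\|\varphi_\varepsilon\|_\infty\le C$), but you are left with $\int_{B_R}\int_0^\tau b\,\varphi_\varepsilon\,\Delta\varphi_\varepsilon\,dx\,dt$. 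Since $b$ is only bounded and measurable (it is built from $u,v\in L^\infty$), you cannot integrate this by parts, and the only available absorption is the weighted Young inequality $|b\varphi_\varepsilon\Delta\varphi_\varepsilon|\le \tfrac12 \rho^{-1}a_\varepsilon(\Delta\varphi_\varepsilon)^2+\tfrac12 \rho\,b^2\varphi_\varepsilon^2 a_\varepsilon^{-1}$, whose last term is only $O(1/\varepsilon)$ because $a_\varepsilon\ge\varepsilon$ is the sole lower bound available on the degenerate set $\{u=v\}$. Hence what you can actually prove is $\int\int a_\varepsilon(\Delta\varphi_\varepsilon)^2\le C/\varepsilon$, and your error term becomes $O(\sqrt{\varepsilon})\cdot O(1/\sqrt{\varepsilon})=O(1)$: the argument misses closing by precisely the factor the reaction term costs. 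Smoothing $b$ does not help, since the derivatives of $b_\varepsilon$ blow up as $\varepsilon\to0$.

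There are two standard repairs, either of which completes your scheme. (i) The \cite{ACP}-type route: remove $b$ from the dual problem, i.e.\ solve $\rho\,\partial_t\varphi_\varepsilon+a_\varepsilon\Delta\varphi_\varepsilon=-\rho\chi$ (or the version with terminal datum $0\le\chi\le1$) backward in time; the energy estimate is then the classical one and uniform in $\varepsilon$. The reaction term stays on the right-hand side of the subtracted inequality, where, using $0\le\varphi_\varepsilon\le C$, $0\le u,v\le M$ and the monotonicity of $s\mapsto s^p$, it is bounded by
\begin{equation*}
\int_{B_R}\int_0^\tau \rho\,(u^p-v^p)\,\varphi_\varepsilon\,dt\,dx\;\le\; pM^{p-1}C\int_{B_R}\int_0^\tau \rho\,(u-v)_+\,dt\,dx\,;
\end{equation*}
running the duality up to time $t$ and applying Gronwall's lemma to $h(t):=\int_{B_R}\rho\,(u-v)_+(\cdot,t)\,dx$ (or, alternatively, proving comparison on a short time interval $\tau_*<1/(pM^{p-1}C)$ and iterating) then yields $h\equiv0$. (ii) Keep your dual problem but exploit the degeneracy itself: for $u\ge v\ge0$ one has $u^m-v^m\ge (u-v)^m$, hence pointwise $|u-v|\le a^{1/(m-1)}$, so $w$ is small exactly where $a$ is; inserting $w^2\le a^{2/(m-1)}$ into your Cauchy--Schwarz step improves the first factor from $O(\sqrt{\varepsilon})$ to $O(\varepsilon^{(1+\theta)/2})$ with $\theta=\min\{1,2/(m-1)\}>0$, which beats the $O(\varepsilon^{-1/2})$ energy factor and sends the error to zero. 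Without one of these ideas (or an equivalent), the proof as written does not close.
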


\begin{proposition}
Let hypothesis \eqref{hp} be satisfied. Then there exists a solution $u$ to problem \eqref{problema} with
$$\tau\geq \tau_0:=\frac{1}{(p-1)\|u_0\|_{\infty}^{p-1}}.$$
Moreover, $u$ is the {\em minimal solution}, in the sense that for any solution $v$ to problem \eqref{problema} there holds
\[u\leq v \quad \textrm{in }\,\,\, \mathbb R^N\times (0, \tau)\,.\]
\label{prop1}
\end{proposition}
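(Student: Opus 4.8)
The plan is to build $u$ by exhaustion: solve the Dirichlet problems \eqref{problemalocale} on the balls $B_R$, show the local solutions increase in $R$, and take the monotone limit as $R\to+\infty$, identifying it as the minimal solution by comparison. First I would invoke Proposition \ref{exiloc} to get, for every $R>0$, a solution $u_R$ of \eqref{problemalocale}. Since $\|u_0\|_{L^\infty(B_R)}\le\|u_0\|_\infty$, the associated existence time satisfies $\tau_R\ge\tau_0$, so every $u_R$ is defined on the common interval $(0,\tau_0)$; I extend each $u_R$ by $0$ to all of $\R^N$. Monotonicity comes from Proposition \ref{confrontolocale}: for $R_1<R_2$, the restriction of $u_{R_2}$ to $B_{R_1}$ is a supersolution of \eqref{problemalocale} on $B_{R_1}$ (it solves the equation there, its initial datum equals $u_0$, and $u_{R_2}\ge0\ $ on $\partial B_{R_1}$), whence $u_{R_1}\le u_{R_2}$ on $B_{R_1}$, and trivially outside. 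Thus $u:=\lim_{R\to+\infty}u_R=\sup_R u_R$ is well defined.

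Next I would produce a uniform bound. The spatially homogeneous function $\overline{u}(t):=\big(\|u_0\|_\infty^{-(p-1)}-(p-1)t\big)^{-1/(p-1)}$ solves $\overline{u}'=\overline{u}^{\,p}$ with $\overline{u}(0)=\|u_0\|_\infty$ and blows up exactly at $t=\tau_0$; since $\Delta(\overline{u}^{\,m})=0$, its initial datum dominates $u_0$, and $\overline{u}>0$ on $\partial B_R$, it is a supersolution of every local problem, so $u_R\le\overline{u}$ by Proposition \ref{confrontolocale}. Hence $u\le\overline{u}$, giving $u\in L^\infty(\R^N\times(0,S))$ for each $S<\tau_0$ and therefore $\tau\ge\tau_0$. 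To see that $u$ solves \eqref{problema}, fix $\varphi\in C_c^\infty(\R^N\times[0,\tau_0))$ with spatial support in some $B_{R_0}$. For $R>R_0$ the function $\varphi$ is admissible in \eqref{veryweaklocale} and $u_R$ satisfies that identity; since $u_R\nearrow u$ pointwise with $u_R^m\le\overline{u}^{\,m}$ and $u_R^p\le\overline{u}^{\,p}$ integrable on the compact support of $\varphi$, dominated convergence lets me pass to the limit term by term, yielding \eqref{veryweak}.

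For minimality, let $v$ be any solution of \eqref{problema}. Its restriction to each $B_R$ is a supersolution of \eqref{problemalocale} (it solves the equation, has initial datum $u_0$, and $v\ge0$ on $\partial B_R$), so Proposition \ref{confrontolocale} gives $u_R\le v$ on $B_R\times(0,\tau)$; letting $R\to+\infty$ yields $u\le v$ on $\R^N\times(0,\tau)$.

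The step requiring the most care is the monotonicity, and more precisely the justification that the nested local solutions may be compared across the Dirichlet boundary $\partial B_{R_1}$: one must check that restricting $u_{R_2}$ (equivalently, extending $u_{R_1}$ by zero) genuinely produces a super- (sub-)solution in the very weak sense of Definition \ref{soluzioneveryweaklocale} despite the admissible test functions being constrained to vanish on the boundary. This is the familiar fact that the zero extension of a nonnegative solution vanishing on $\partial B_{R_1}$ acquires only a favorable (nonnegative) singular contribution in $\Delta(\,\cdot^{\,m})$ along the interface, and the detailed verification, together with the comparison and local-existence machinery, is exactly what is supplied by \cite[Section 3]{MP1}; the remaining work is the routine verification of the sub/supersolution inequalities and the dominated-convergence passage to the limit described above.
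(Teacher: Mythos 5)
Your construction --- exhaustion by the Dirichlet problems \eqref{problemalocale} on balls $B_R$, monotonicity in $R$ via Proposition \ref{confrontolocale}, the spatially flat ODE barrier $\bar u(t)=\bigl(\|u_0\|_\infty^{-(p-1)}-(p-1)t\bigr)^{-1/(p-1)}$ giving the uniform bound up to $\tau_0$, monotone/dominated passage to the limit in the very weak formulation, and minimality by comparing each $u_R$ with an arbitrary solution $v$ --- is exactly the argument of the proof the paper relies on (it defers this proposition to \cite[Section 3]{MP1}). The proposal is correct, including the one delicate point you flag, namely that restricting a solution to a smaller ball (equivalently, extending by zero) yields a super-(sub-)solution in the sense of Definition \ref{soluzioneveryweaklocale} thanks to the favorable sign of the boundary term.
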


In conclusion, we can state the following two comparison results, which will be used in the sequel.

\begin{proposition}\label{cpsup}
Let hypothesis \eqref{hp} be satisfied. Let $\bar{u}$ be a supersolution to problem \eqref{problema}. Then, if $u$ is the minimal solution to problem \eqref{problema} given by Proposition \ref{prop1}, then
\begin{equation}\label{eq172}
u\le\bar{u} \quad \text{a.e. in } \R^N \times (0,\tau)\,.
\end{equation}
In particular, if $\bar{u}$ exists until time $\tau$, then also $u$ exists at least until time $\tau$.
\end{proposition}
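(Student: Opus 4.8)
The plan is to reduce the global comparison to the local comparison principle of Proposition \ref{confrontolocale}, exploiting the approximation scheme that underlies the minimal solution of Proposition \ref{prop1}.

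First I recall that the minimal solution $u$ is obtained as the monotone limit $u=\lim_{R\to+\infty}u_R$ (increasing in $R$, a.e.\ in $\R^N\times(0,\tau)$ once each $u_R$ is extended by zero outside $B_R$), where $u_R$ is the solution of the local Dirichlet problem \eqref{problemalocale} on $B_R$ with datum $u_0|_{B_R}$ furnished by Proposition \ref{exiloc}. Hence it is enough to prove that $u_R\le\bar u$ a.e.\ in $B_R\times(0,\tau)$ for every fixed $R>0$, and then to pass to the limit $R\to+\infty$.

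To obtain the bound at fixed $R$, I would show that the restriction of $\bar u$ to $B_R\times(0,\tau)$ is a supersolution of \eqref{problemalocale} in the sense of Definition \ref{soluzioneveryweaklocale}, so that Proposition \ref{confrontolocale} applies with subsolution $u_R$ and supersolution $\bar u$. This is exactly the step one uses to prove minimality in Proposition \ref{prop1}, now carried out with a supersolution in place of a solution. Concretely, given a nonnegative test function $\varphi\in C_c^\infty(\overline{B_R}\times[0,\tau))$ with $\varphi|_{\partial B_R}=0$, I extend $\varphi$ by zero to $\R^N$ and regularize it into an admissible nonnegative global test function; inserting it into the global supersolution inequality \eqref{veryweak} for $\bar u$ and sending the regularization parameter to zero reproduces the local inequality \eqref{veryweaklocale} with sign $\ge$, up to a surface term on $\partial B_R$ carrying the factor $-\partial_\nu\varphi\ge 0$ (the outward normal derivative of the nonnegative $\varphi$ is nonpositive on $\partial B_R$). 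Since $\bar u\ge 0$, this surface term has the favorable sign and only strengthens the inequality; combined with $\bar u(\cdot,0)\ge u_0$ on $B_R$, it shows that $\bar u$ is a local supersolution, and Proposition \ref{confrontolocale} gives $u_R\le\bar u$ a.e.\ in $B_R\times(0,\tau)$.

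Letting $R\to+\infty$ and using $u_R\nearrow u$ then yields \eqref{eq172}. The final assertion is immediate: if $\bar u\in L^\infty(\R^N\times(0,\tau))$, then \eqref{eq172} forces $u$ to remain bounded on every $\R^N\times(0,S)$ with $S<\tau$, so $u$ cannot cease to exist before $\tau$ and is defined at least up to $\tau$. I expect the only delicate point to be the restriction step: justifying the passage from global to local test functions and verifying that the boundary contribution generated by the non-smooth zero-extension of $\varphi$ has a favorable sign, which is precisely where the nonnegativity of $\bar u$ and the homogeneous Dirichlet condition of \eqref{problemalocale} come into play.
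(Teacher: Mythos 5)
Your proposal is correct and takes essentially the same route as the paper's own proof (which is deferred to \cite[Section 3]{MP1}): there, too, the minimal solution is realized as the increasing limit of the solutions $u_R$ of the Dirichlet problems \eqref{problemalocale}, the restriction of $\bar u$ to $B_R$ is shown to be a supersolution of the local problem, Proposition \ref{confrontolocale} gives $u_R\le\bar u$, and one passes to the limit $R\to+\infty$. Your treatment of the delicate step, namely the sign of the surface term $-\partial_\nu\varphi\ge 0$ produced by the zero-extension of the local test function together with $\bar u\ge 0$, is exactly the right justification that the restriction is a local supersolution.
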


\begin{proposition}\label{cpsub}
Let hypothesis \eqref{hp} be satisfied. Let $u$ be a solution to problem \eqref{problema} for some time $\tau=\tau_1>0$ and $\underline{u}$ a subsolution to problem \eqref{problema} for some time $\tau=\tau_2>0$. Suppose also that
$$
\operatorname{supp }\underline{u}|_{\R^N\times[0,S]} \text{ is compact for every }  \, S\in (0, \tau_2)\,.
$$
Then
\begin{equation}\label{eq173}
u\ge\underline{u} \quad \text{ in }\,\, \R^N \times \left(0,\min\{\tau_1,\tau_2\}\right)\,.
\end{equation}
\end{proposition}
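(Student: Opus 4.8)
The plan is to localize the inequality to a large ball and then invoke the local comparison principle, Proposition \ref{confrontolocale}. Fix $S\in(0,\min\{\tau_1,\tau_2\})$; it suffices to prove $u\ge\underline u$ a.e.\ in $\R^N\times(0,S)$ and afterwards let $S\uparrow\min\{\tau_1,\tau_2\}$. By the compactness hypothesis on $\operatorname{supp}\underline u|_{\R^N\times[0,S]}$, I may fix $R>0$ so large that this support is contained in $B_{R_0}\times[0,S]$ for some $R_0<R$; in particular $\underline u\equiv 0$ on a neighborhood of $\partial B_R$ for all $t\in[0,S]$, and on $(\R^N\setminus B_R)\times(0,S)$ one trivially has $\underline u=0\le u$. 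Thus everything reduces to the comparison $\underline u\le u$ inside $B_R$, which I would obtain by showing that, restricted to $B_R\times(0,S)$, the function $\underline u$ is a subsolution and $u$ a supersolution of the local problem \eqref{problemalocale} with initial datum $u_0|_{B_R}$ and homogeneous Dirichlet data; Proposition \ref{confrontolocale} then yields the claim.

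First I would check that $\underline u|_{B_R}$ is a subsolution of \eqref{problemalocale} in the sense of Definition \ref{soluzioneveryweaklocale}. This is the easy half, and it is where the compact-support hypothesis is essential: choose a cutoff $\chi\in C_c^\infty(B_R)$ with $0\le\chi\le1$ and $\chi\equiv1$ on $\operatorname{supp}\underline u|_{[0,S]}$. Given any admissible local test function $\varphi\in C_c^\infty(\overline{B_R}\times[0,\tau))$ with $\varphi\ge0$ and $\varphi|_{\partial B_R}=0$, the product $\chi\varphi$ extends by zero to an element of $C_c^\infty(\R^N\times[0,\tau))$; since $\chi\equiv1$ wherever $\underline u\neq0$, inserting $\chi\varphi$ into the very weak subsolution inequality \eqref{veryweak} for $\underline u$ yields the local subsolution inequality \eqref{veryweaklocale} for $\varphi$, the factor $\chi\le1$ affecting only the initial-datum term and in the direction that preserves the inequality.

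The supersolution property of $u|_{B_R}$ is the main obstacle. Morally it holds because $u$ solves the equation in $B_R$ and its boundary values satisfy $u\ge0$, which matches the correct direction of the zero Dirichlet condition: extending a nonnegative local test function $\varphi$ by zero to $\bar\varphi$ produces a distributional Laplacian $\Delta\bar\varphi=(\Delta\varphi)\,\mathbb 1_{B_R}-(\partial_n\varphi)\,\sigma_{\partial B_R}$ whose surface term has nonnegative density, since a nonnegative $\varphi$ vanishing on $\partial B_R$ has outward normal derivative $\partial_n\varphi\le0$. Testing the equation for $u$ against $\bar\varphi$ would then generate the extra term $\int_0^\tau\int_{\partial B_R}u^m|\partial_n\varphi|\,d\sigma\,dt\ge0$, turning the equality into the desired inequality $-\int_{B_R}\int\rho u\varphi_t\ge\int_{B_R}\rho u_0\varphi(\cdot,0)+\int_{B_R}\int u^m\Delta\varphi+\int_{B_R}\int\rho u^p\varphi$. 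The difficulty is that $u$ is only a bounded very weak solution, so $u^m$ has no boundary trace and $\bar\varphi$ is not an admissible smooth test function, making this computation purely formal. I would make it rigorous by approximating $\bar\varphi$ by smooth functions supported in $B_R$ (e.g.\ $\varphi$ multiplied by a boundary-layer cutoff, combined with a time mollification), verifying that the boundary-layer contribution retains its favorable sign in the limit.

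Once both one-sided properties are in place, Proposition \ref{confrontolocale} gives $\underline u\le u$ a.e.\ in $B_R\times(0,S)$, which combined with $\underline u=0\le u$ on $(\R^N\setminus B_R)\times(0,S)$ yields $u\ge\underline u$ on $\R^N\times(0,S)$; letting $S\uparrow\min\{\tau_1,\tau_2\}$ finishes the proof. An alternative that bypasses the boundary-trace issue uses Proposition \ref{prop1}: since any solution $u$ dominates the minimal solution $u_{\min}$, it suffices to prove $u_{\min}\ge\underline u$, and realizing $u_{\min}$ as the monotone limit of the local solutions $u_R$ of \eqref{problemalocale} lets one compare $\underline u$ with $u_R$ by Proposition \ref{confrontolocale} to obtain $\underline u\le u_R\le u_{\min}\le u$. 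This route avoids the supersolution argument entirely, at the price of constraining $S$ to lie below the local existence time of $u_R$, which must then be reconciled with the full interval $(0,\min\{\tau_1,\tau_2\})$.
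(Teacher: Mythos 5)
Your proposal is correct and follows essentially the same route as the paper's own proof (which is deferred to \cite[Section 3]{MP1}): fix $S<\min\{\tau_1,\tau_2\}$, enclose $\operatorname{supp}\underline u|_{\R^N\times[0,S]}$ in a ball $B_R$, observe that $\underline u$ restricted to $B_R$ is a subsolution and $u$ a supersolution of the Dirichlet problem \eqref{problemalocale}, apply Proposition \ref{confrontolocale}, and conclude trivially outside $B_R$ where $\underline u\equiv 0\le u$. The boundary-term verification you single out as the main obstacle is exactly the point the paper treats as immediate (your formal computation with $\partial_n\varphi\le 0$, made rigorous by the boundary-layer approximation you sketch), and you are right to prefer this route over your alternative via Proposition \ref{prop1}, which, as you note, only yields the comparison up to the guaranteed existence time $\tau_0=\frac{1}{(p-1)\|u_0\|_\infty^{p-1}}$ of the minimal solution rather than on all of $\left(0,\min\{\tau_1,\tau_2\}\right)$.
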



In what follows we also consider solutions of equations of the form
\begin{equation}\label{eq189}
u_t = \frac 1{\rho(x)}\Delta(u^m) + u^p \quad \textrm{in }\,\, \Omega\times (0, \tau),
\end{equation}
where $\Omega\subseteq\mathbb R^N$. Solutions are meant in the following sense.

 \begin{definition}\label{soldom}
Let $\tau>0$, $p>1, m>1$. We say that a nonnegative function $u\in L^{\infty}(\Omega\times (0,S))$ for any $S<\tau$ is a solution of problem \eqref{problemalocale} if
\begin{equation}
\begin{aligned}
-\int_{\Omega}^{}\int_{0}^{\tau} \rho(x) u\, \varphi_t \,dt\,dx &= \int_{\Omega}^{}\int_{0}^{\tau}  u^m \Delta \varphi \,dt\,dx \\ &+ \int_{\Omega}^{}\int_{0}^{\tau} \rho(x) u^p \varphi \,dt\,dx
\end{aligned}
\end{equation}
for any $\varphi \in C_c^{\infty}(\overline{\Omega} \times [0,\tau))$ with $\varphi| _{\partial \Omega}=0$ for all $t\in [0,\tau)$. Moreover, we say that a nonnegative function $u\in L^{\infty}(\Omega\times (0,S))$ for any $S<\tau$ is a subsolution (supersolution) if it satisfies \eqref{veryweaklocale} with the inequality $"\le"$ ($"\ge"$) instead of $"="$, with $\varphi\geq 0$.
\label{soluzioneveryweaklocale}
\end{definition}

Finally, let us recall the following well-known criterion, that will be used in the sequel.
Let $\Omega\subseteq \mathbb R^N$ be an open set. Suppose that $\Omega=\Omega_1\cup \Omega_2$ with  $\Omega_1\cap \Omega_2=\emptyset$, and that   $\Sigma:=\partial \Omega_1\cap\partial \Omega_2$ is of class $C^1$. Let $n$ be the unit outwards normal to $\Omega_1$ at $\Sigma$.
Let
\begin{equation}\label{eq188}
u=\begin{cases}
u_1 & \textrm{in }\, \Omega_1\times [0, T),\\
u_2 & \textrm{in }\, \Omega_2\times [0, T)\,,
\end{cases}
\end{equation}
where $\partial_t u\in C(\Omega_1\times (0, T)), u_1^m\in C^2(\Omega_1\times (0, T))\cap C^1(\overline{\Omega}_1\times (0, T)) , \partial_t u_2\in C(\Omega_2\times (0, T))),\, u_2^m\in C^2(\Omega_2\times (0, T))\cap C^1(\overline{\Omega}_2\times (0, T)).$

\begin{lemma}\label{lemext}
Let assumption \eqref{hp} be satisfied.

(i) Suppose that
\begin{equation}\label{eq185}
\begin{aligned}
&\partial_t u_1 \geq \frac 1{\rho}\Delta u_1^m +u_1^p \quad \textrm{for any}\,\,\, (x,t)\in \Omega_1\times (0, T),\\
&\partial_t u_2 \geq  \frac 1{\rho}\Delta u_2^m + u_2^p \quad \textrm{for any}\,\,\, (x,t)\in \Omega_2\times (0, T),
\end{aligned}
\end{equation}
\begin{equation}\label{eq186}
u_1=u_2, \quad \frac{\partial u_1^m}{\partial n}\geq \frac{\partial u_2^m}{\partial n}\quad \textrm{for any }\,\, (x,t)\in \Sigma\times (0, T)\,.
\end{equation}
Then $u$, defined in \eqref{eq188}, is a supersolution to equation \eqref{eq189}, in the sense of Definition \ref{soldom}.

(ii)  Suppose that
\begin{equation}\label{eq185b}
\begin{aligned}
&\partial_t u_1 \leq  \frac 1{\rho}\Delta u_1^m +u_1^p \quad \textrm{for any}\,\,\, (x,t)\in \Omega_1\times (0, T),\\
&\partial_t u_2 \leq  \frac 1{\rho}\Delta u_2^m + u_2^p \quad \textrm{for any}\,\,\, (x,t)\in \Omega_2\times (0, T),
\end{aligned}
\end{equation}
\begin{equation}\label{eq186b}
u_1=u_2, \quad\frac{\partial u_1^m}{\partial n}\leq \frac{\partial u_2^m}{\partial n}\quad \textrm{for any}\,\, (x,t)\in \Sigma\times (0, T)\,.
\end{equation}
Then $u$, defined in \eqref{eq188}, is a subsolution to equation \eqref{eq189}, in the sense of Defi\-nition \ref{soldom}.
\end{lemma}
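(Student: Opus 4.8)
The plan is to establish the defining integral inequality of a very weak supersolution (respectively subsolution) directly from the pointwise differential inequalities \eqref{eq185} (respectively \eqref{eq185b}), by testing against a nonnegative test function, integrating by parts in both time and space, and carefully collecting the boundary contributions produced on the interface $\Sigma$. Throughout, I fix an arbitrary $\varphi\in C_c^\infty(\overline{\Omega}\times[0,T))$ with $\varphi\ge 0$ and $\varphi|_{\partial\Omega}=0$, and aim at the inequality of Definition \ref{soldom} with ``$\ge$'' for part (i).

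First I would work on each subdomain separately. Multiplying the first inequality in \eqref{eq185} by $\rho\varphi\ge0$ and integrating over $\Omega_1\times(0,T)$ gives $\int\int \rho\varphi\,\partial_t u_1 \ge \int\int \varphi\,\Delta u_1^m + \int\int \rho\varphi\,u_1^p$. Integrating by parts in time, using $\varphi(\cdot,T)=0$, transfers $\partial_t$ onto $\varphi$ and reproduces the left-hand side of Definition \ref{soldom}; applying Green's second identity transfers the Laplacian from $u_1^m$ onto $\varphi$ at the cost of a boundary integral over $\partial\Omega_1$ whose integrand is $u_1^m\,\partial_{\nu_1}\varphi-\varphi\,\partial_{\nu_1}u_1^m$, where $\nu_1$ is the outward normal. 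The regularity hypotheses $u_1^m\in C^2(\Omega_1\times(0,T))\cap C^1(\overline{\Omega}_1\times(0,T))$ and $\partial_t u_1\in C(\Omega_1\times(0,T))$ guarantee that every integrand is well defined up to $\Sigma$. The identical computation is performed on $\Omega_2$, with outward normal $\nu_2=-n$.

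Then I would sum the two contributions. The interior integrals reassemble into the full very weak expression over $\Omega$, while on the outer boundary $\partial\Omega$ nothing survives: in the intended application $\Omega=\mathbb{R}^N$ and the compact support of $\varphi$ kills all terms at infinity (and where $\partial\Omega$ is genuine, $\varphi|_{\partial\Omega}=0$ together with compact support removes them). On $\Sigma$, since the outward normal of $\Omega_2$ is $-n$, the two interface integrals combine into
$$\int_0^T\!\!\int_\Sigma (u_1^m-u_2^m)\,\partial_n\varphi\,dS\,dt \;-\; \int_0^T\!\!\int_\Sigma \varphi\,(\partial_n u_1^m-\partial_n u_2^m)\,dS\,dt.$$
Invoking \eqref{eq186}, the matching $u_1=u_2$ on $\Sigma$ forces $u_1^m=u_2^m$ and annihilates the first integral, while $\partial_n u_1^m\ge\partial_n u_2^m$ together with $\varphi\ge0$ makes the surviving integral have exactly the sign needed to preserve ``$\ge$''. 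Collecting everything yields the supersolution inequality of Definition \ref{soldom}, proving (i).

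Part (ii) follows verbatim with every inequality reversed: starting from \eqref{eq185b} one reaches the reversed global sign, and \eqref{eq186b} gives $\partial_n u_1^m\le\partial_n u_2^m$, so the remaining interface integral now carries precisely the sign required for a subsolution. The only genuine obstacle is the rigorous justification of the integrations by parts up to $\Sigma$, in view of the degeneracy of the porous medium operator where $u$ vanishes; this I would handle by exhausting each $\Omega_i$ with smooth interior subdomains and passing to the limit, the $C^1(\overline{\Omega}_i\times(0,T))$ regularity of $u_i^m$ ensuring convergence of the boundary integrals. Once this is in place, the interface bookkeeping above is purely algebraic.
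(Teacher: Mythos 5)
The paper never proves this lemma in the text: it is one of the auxiliary results whose proofs are deferred to \cite[Section 3]{MP1}, and the proof given there is exactly your argument — test against $\varphi\ge 0$, integrate by parts separately on $\Omega_1\times(0,T)$ and $\Omega_2\times(0,T)$, and combine the two interface contributions. Your interface bookkeeping is correct: since $\nu_2=-n$, the two surface integrals collapse to $\int_0^T\!\int_\Sigma (u_1^m-u_2^m)\,\partial_n\varphi\,dS\,dt-\int_0^T\!\int_\Sigma \varphi\,(\partial_n u_1^m-\partial_n u_2^m)\,dS\,dt$, the first dies by $u_1=u_2$, and the second has the favorable sign by the flux inequality and $\varphi\ge0$; the exhaustion remark to justify Green's identity up to $\Sigma$ with only $u_i^m\in C^1(\overline{\Omega}_i\times(0,T))$ is also the right fix.

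There is, however, one concrete slip, and it matters precisely for part (ii). You claim that on a genuine outer boundary the terms are removed by ``$\varphi|_{\partial\Omega}=0$ together with compact support''. That is not so: Green's identity leaves $-\int_0^T\!\int_{\partial\Omega} u^m\,\partial_\nu\varphi\,dS\,dt$, and $\varphi|_{\partial\Omega}=0$ does not force $\partial_\nu\varphi=0$; all one gets is $\partial_\nu\varphi\le 0$ (a nonnegative function vanishing on $\partial\Omega$ has nonpositive outward normal derivative), so this term is nonnegative. The same holds for the term $\int_\Omega\rho\,u(\cdot,0)\,\varphi(\cdot,0)\,dx$ produced by the time integration by parts, which you pass over in silence. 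Both extras are harmless in part (i), where they only reinforce the inequality ``$\ge$'', but they sit on the wrong side of ``$\le$'' in part (ii); hence ``(ii) follows verbatim with every inequality reversed'' is not accurate. To close (ii) you must additionally use that $\partial\Omega$ is negligible or that $u$ vanishes there (which is what happens in the paper's applications, where $\Omega=\mathbb{R}^N$ or $\mathbb{R}^N\setminus\{0\}$), and you must absorb the $t=0$ term into an initial-datum term — note that Definition \ref{soldom} as printed omits it even though its test functions need not vanish at $t=0$, an inconsistency of the paper that a careful proof should at least flag. This is a bookkeeping gap rather than a wrong idea, but as written the subsolution half is not fully proved.
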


%

\section{Global existence: proofs}

In what follows we set $r\equiv |x|$. We construct a suitable family of supersolutions of equation
\begin{equation}
u_t =\frac{1}{\rho(x)}\Delta(u^m)+u^p \quad \text{ in } \R^N\times(0,+\infty).
\label{equazionecritical}
\end{equation}

\subsection{Order of decaying: $q=2$}
We assume \eqref{hp}, \eqref{hpSup} with $q=2$ and \eqref{hpC}. In order to construct a suitable family of supersolutions of \eqref{equazionecritical}, we define, for all $(x,t)\in \R^N \times (0,+\infty)$,
\begin{equation}
{\bar{u}}(x,t)\equiv \bar{u}(r(x),t):=C\zeta(t)\left [1-\frac{\log(r+r_0)}{a}\eta(t)\right]_{+}^{\frac{1}{m-1}}\,,
\label{subsupercritical}
\end{equation}
where $\eta$, $\zeta \in C^1([0, +\infty); [0, +\infty))$ and $C > 0$, $a > 0$, $r_0>e$.

\smallskip

Now, we compute
$$
\bar{u}_t - \frac{1}{\rho}\Delta(\bar{u}^m)-\bar{u}^p.
$$
To this aim, let us set
$$
F(r,t):= 1-\frac{\log(r+r_0)}{a}\,\eta(t)\,,
$$
and define
$$
\textit{D}_1:=\left \{ (x,t) \in [\R^N \setminus \{0\}]  \times (0,+\infty)\,\, | \,\,0<F(r,t)<1 \right \}.
$$
For any $(x,t) \in \textit{D}_1$, we have:

\begin{equation}
\begin{aligned}
\bar{u}_t &=C\zeta ' F^{\frac{1}{m-1}} + C\zeta \frac{1}{m-1} F^{\frac{1}{m-1}-1} \left ( -\frac{\log(r+r_0)}{a} \eta ' \right )\\
&=C\zeta ' F^{\frac{1}{m-1}} + C\zeta \frac{1}{m-1} \left (1-\frac{\log(r+r_0)}{a} \eta \right ) \frac{\eta'}{\eta}F^{\frac{1}{m-1}-1} - C\zeta \frac{1}{m-1} \frac{\eta'}{\eta} F^{\frac{1}{m-1}-1}\\
&=C\zeta ' F^{\frac{1}{m-1}} + C\zeta \frac{1}{m-1} \frac{\eta'}{\eta}F^{\frac{1}{m-1}} - C\zeta \frac{1}{m-1} \frac{\eta'}{\eta} F^{\frac{1}{m-1}-1}. \\
\end{aligned}
\label{dertempocritical}
\end{equation}
\begin{equation}
(\bar{u}^m)_r=-\frac{C^m}{a} \zeta^m \frac{m}{m-1} F^{\frac{1}{m-1}} \frac{1}{(r+r_0)}\eta.
\label{derprimacritical}
\end{equation}
\begin{equation}
\begin{aligned}
(\bar{u}^m)_{rr}&=-\frac{C^m}{a}\zeta^m \frac{m}{(m-1)^2} F^{\frac{1}{m-1}-1} \left(1-\frac{\log(r+r_0)}{a} \eta\right ) \eta \frac{1}{(r+r_0)^2\log(r+r_0)} \\ &+ \frac{C^m}{a} \zeta^m \frac{m}{(m-1)^2} F^{\frac{1}{m-1}-1} \frac{\eta}{(r+r_0)^2\log(r+r_0)} + \frac{C^m}{a} \zeta^m \frac{m}{m-1} F^{\frac{1}{m-1}} \frac{1}{(r+r_0)^2}\eta\\
&=-\frac{C^m}{a} \zeta^m \frac{m}{(m-1)^2} F^{\frac{1}{m-1}} \eta \frac{1}{(r+r_0)^2\log(r+r_0)}\\& + \frac{C^m}{a} \zeta^m \frac{m}{(m-1)^2} F^{\frac{1}{m-1}-1} \frac{\eta}{(r+r_0)^2\log(r+r_0)} + \frac{C^m}{a} \zeta^m \frac{m}{m-1} F^{\frac{1}{m-1}} \frac{1}{(r+r_0)^2}\eta.
\end{aligned}
\label{dersecondacritical}
\end{equation}
\begin{equation}\label{laplaciano}
\begin{aligned}
\Delta(\bar{u}^m)
&= \frac{(N-1)}{r}(\bar{u}^m)_r + (\bar{u}^m)_{rr} \\
&= \frac{(N-1)}{r}\left (-\frac{C^m}{a} \zeta^m \frac{m}{m-1} F^{\frac{1}{m-1}} \frac{1}{(r+r_0)}\eta \right ) \\
&-\frac{C^m}{a} \zeta^m \frac{m}{(m-1)^2} F^{\frac{1}{m-1}} \eta \frac{1}{(r+r_0)^2\log(r+r_0)} \\
&+ \frac{C^m}{a} \zeta^m \frac{m}{(m-1)^2} F^{\frac{1}{m-1}-1} \frac{\eta}{(r+r_0)^2\log(r+r_0)} \\
&+ \frac{C^m}{a} \zeta^m \frac{m}{m-1} F^{\frac{1}{m-1}} \frac{1}{(r+r_0)^2}\eta\\
&\leq  \frac{N-1}{r+r_0}\left (-\frac{C^m}{a} \zeta^m \frac{m}{m-1} F^{\frac{1}{m-1}} \frac{1}{(r+r_0)}\eta \right ) \\
&-\frac{C^m}{a} \zeta^m \frac{m}{(m-1)^2} F^{\frac{1}{m-1}} \eta \frac{1}{(r+r_0)^2\log(r+r_0)} \\
&+ \frac{C^m}{a} \zeta^m \frac{m}{(m-1)^2} F^{\frac{1}{m-1}-1} \frac{\eta}{(r+r_0)^2\log(r+r_0)} \\
&+ \frac{C^m}{a} \zeta^m \frac{m}{m-1} F^{\frac{1}{m-1}} \frac{1}{(r+r_0)^2}\eta\\
\end{aligned}
\end{equation}

We also define
\begin{equation}\label{coeff}
\begin{aligned}
&K:=\left [ \left (\frac{m-1}{p+m-2}\right)^{\frac{m-1}{p-1}} - \left (\frac{m-1}{p+m-2}\right)^{\frac{p+m-2}{p-1}} \right]\,>0\,,\\
&\bar{\sigma}(t) := \zeta ' + \zeta \frac{1}{m-1} \frac{\eta'}{\eta} + \frac{C^{m-1}}{a} \zeta^m \frac{m}{m-1} \eta k_1 \left (N-2\right ),\\
&\bar{\delta}(t) := \zeta \frac{1}{m-1} \frac{\eta'}{\eta} + C^{m-1} \zeta^m \frac{m}{(m-1)^2} \frac{\eta}{a}\frac 1 {\log(r_0)} k_2\,, \\
& \bar{\gamma}(t):=C^{p-1}\zeta^p\,.
\end{aligned}
\end{equation}

\begin{proposition}
Let $\zeta=\zeta(t)$, $\eta=\eta(t) \in C^1([0,+\infty);[0, +\infty))$. Let $K$, $\bar\sigma$, $\bar\delta$, $\bar\gamma$ be as defined in \eqref{coeff}. Assume \eqref{hp}, \eqref{hpSup} with $q=2$, \eqref{hpC} and that, for all $t\in (0,+\infty)$,
\begin{equation}
-\frac{\eta'}{\eta^2} \ge \frac{1}{\log(r_0)}\frac{C^{m-1}}{a} \zeta^{m-1}\frac{m}{m-1}k_2
\label{cond1supercritical}
\end{equation}
and
\begin{equation}
\zeta' + \frac{C^{m-1}}{a} \zeta^m \frac{m}{m-1} \eta \left[(N-2)k_1-\frac{k_2}{(m-1)\log(r_0)}\right] -C^{p-1}\zeta^p \ge 0\,.
\label{cond2supercritical}
\end{equation}
then $\bar{u}$ defined in \eqref{subsupercritical} is a supersolution of equation \eqref{equazionecritical}.
\label{propsupersolutioncritical}
\end{proposition}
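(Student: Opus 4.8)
The plan is to check the differential inequality $\bar u_t - \frac{1}{\rho}\Delta(\bar u^m) - \bar u^p \ge 0$ pointwise on the region $D_1=\{0<F<1\}$, where $\bar u$ is positive and smooth in $x$ for $r>0$, and then to extend it to all of $\R^N\times(0,+\infty)$ by gluing with the trivial supersolution $\bar u\equiv 0$ on $\{F\le 0\}$, by means of Lemma \ref{lemext}(i).

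First I would collect the elementary computations \eqref{dertempocritical}--\eqref{laplaciano} into a single lower bound for the operator that depends on $x$ only through $F$. The delicate point is to insert the density bounds \eqref{hpSup} (with $q=2$) with the correct signs. Once its first and last terms are combined into a single factor proportional to $(N-2)$, the estimate \eqref{laplaciano} for $\Delta(\bar u^m)$ is a sum of two nonpositive terms and one nonnegative term. Since we are subtracting $\frac1\rho\Delta(\bar u^m)$, I would bound the nonpositive terms using $\frac1\rho\ge k_1(r+r_0)^2$ and the nonnegative one using $\frac1\rho\le k_2(r+r_0)^2$; all weights $(r+r_0)^{\pm 2}$ then cancel, and since $r_0>e$ gives $\log(r+r_0)\ge\log r_0>1$ I may replace $\log(r+r_0)$ by $\log r_0$ in the (subtracted) bad term. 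Dropping the harmless middle nonpositive term and grouping powers of $F$ yields precisely
\[
\bar u_t - \frac{1}{\rho}\Delta(\bar u^m) - \bar u^p \;\ge\; C\bar\sigma\,F^{\frac{1}{m-1}} - C\bar\delta\,F^{\frac{1}{m-1}-1} - C\bar\gamma\,F^{\frac{p}{m-1}},
\]
with $\bar\sigma,\bar\delta,\bar\gamma$ exactly as in \eqref{coeff}.

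The heart of the argument — and the step I expect to be the main obstacle — is the scalar inequality that remains. Dividing by the positive factor $CF^{\frac{1}{m-1}-1}$, it suffices to show
\[
h(F):=\bar\sigma\,F-\bar\delta-\bar\gamma\,F^{\mu+1}\ge 0\qquad\text{for all }F\in(0,1],\qquad \mu:=\frac{p-1}{m-1}>0.
\]
Here the two hypotheses enter through algebraic identities: a direct rearrangement shows \eqref{cond1supercritical} is equivalent to $\bar\delta\le 0$, and \eqref{cond2supercritical} is equivalent to $\bar\sigma-\bar\delta-\bar\gamma\ge 0$, i.e. to $h(1)\ge 0$. The key remark is that $\phi(F):=\bar\sigma F-\bar\gamma F^{\mu+1}$ is concave on $[0,1]$ (because $\bar\gamma=C^{p-1}\zeta^p\ge 0$ and $\mu+1>1$), hence attains its minimum at an endpoint: $\phi(F)\ge\min\{\phi(0),\phi(1)\}=\min\{0,\ \bar\sigma-\bar\gamma\}$. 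Therefore $h(F)=\phi(F)-\bar\delta\ge\min\{-\bar\delta,\ \bar\sigma-\bar\delta-\bar\gamma\}\ge 0$, the two candidate minima being nonnegative by \eqref{cond1supercritical} and \eqref{cond2supercritical} respectively. The constant $K$ in \eqref{coeff} equals $\max_{F\in[0,1]}(F-F^{\mu+1})$ and gives an alternative bound on the reaction term, but this endpoint/concavity argument is more direct and uses only the two stated conditions. Note that this establishes the inequality on all of $D_1$, since for fixed $t$ the variable $F$ ranges only over a subinterval of $(0,1)$.

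It remains to assemble a global supersolution. On $\{F<0\}$ we have $\bar u\equiv 0$, which satisfies the inequality trivially; across the free boundary $\Sigma=\{F=0\}$ the function $\bar u^m\sim F^{\frac{m}{m-1}}$ has vanishing normal derivative from the inside (as $\frac{m}{m-1}>1$), so both sides of the matching condition \eqref{eq186} are zero and Lemma \ref{lemext}(i) produces a supersolution of \eqref{equazionecritical}. Finally, at the origin $\bar u(\cdot,t)$ is radially decreasing with a downward corner, so the distributional Laplacian of $\bar u^m$ there carries no positive singular part (for $N\ge 3$ the radial formula stays locally integrable); this only reinforces $-\frac1\rho\Delta(\bar u^m)\ge 0$, so the pointwise estimate extends through $r=0$ and $\bar u$ is a very weak supersolution in the sense of Definition \ref{soldom}.
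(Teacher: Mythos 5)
Your proposal is correct and follows essentially the same route as the paper's own proof: the same reduction of the operator to the lower bound $CF^{\frac{1}{m-1}-1}\bigl[\bar\sigma(t) F-\bar\delta(t)-\bar\gamma(t) F^{\frac{p+m-2}{m-1}}\bigr]$ via the density bounds, the same concavity-plus-endpoints argument (the paper likewise checks $\varphi(0)\ge 0$ and $\varphi(1)\ge 0$, which are exactly \eqref{cond1supercritical} and \eqref{cond2supercritical}; the constant $K$ is only needed later for the subsolution construction), and the same gluing across $\{F=0\}$ via Lemma \ref{lemext} together with a Kato-type argument at the origin where $(\bar u^m)_r(0,t)\le 0$.
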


\begin{proof}[Proof of Proposition \ref{propsupersolutioncritical}]

In view of \eqref{dertempocritical}, \eqref{derprimacritical}, \eqref{dersecondacritical} and \eqref{laplaciano}, for any $(x,t)\in D_1$,
\begin{equation}\label{eq1}
\begin{aligned}
\bar{u}_t - &\frac{1}{\rho}\Delta(\bar{u}^m)-\bar{u}^p \\
\geq \,\,
&C\zeta ' F^{\frac{1}{m-1}} + C\zeta \frac{1}{m-1} \frac{\eta'}{\eta}F^{\frac{1}{m-1}} - C\zeta \frac{1}{m-1} \frac{\eta'}{\eta} F^{\frac{1}{m-1}-1}\\ &+ \frac{C^m}{a} \zeta^m \frac{m}{m-1} \eta \frac{1}{\rho (r+r_0)^2} F^{\frac{1}{m-1}} \left (\frac{1}{(m-1)\log(r+r_0)}+N-2 \right) \\ & -\frac{C^m}{a} \zeta^m \frac{m}{(m-1)^2}F^{\frac{1}{m-1}-1}\frac{\eta}{\log(r+r_0)}\frac{1}{\rho (r+r_0)^2}- C^p \zeta^p F^{\frac{p}{m-1}}.
\end{aligned}
\end{equation}
Thanks to hypothesis \eqref{hp}, \eqref{hpSup} and \eqref{hpC}, we have
\begin{equation}\label{eq2}
\frac{1}{\log(r+r_0)} \ge 0\,, \,\,\,\,\,\,\, -\frac{1}{\log(r+r_0)} \ge -\frac{1}{\log(r_0)} \quad \text{for all }\,\, x\in \R^N\,,
\end{equation}
\begin{equation}\label{eq3}
\frac{1}{\rho (r+r_0)^2} \ge k_1\,, \,\,\,\,\,\,\,\,
-\frac{1}{\rho (r+r_0)^2} \ge -k_2 \quad \text{for all }\,\, x\in \R^N\,.
\end{equation}
From \eqref{eq1}, \eqref{eq2} and \eqref{eq3} we get,
\begin{equation}\label{eq5}
\begin{aligned}
&\bar{u}_t - \frac{1}{\rho}\Delta(\bar{u}^m)-\bar{u}^p  \\
&\ge CF^{\frac{1}{m-1}-1} \left \{F\left [\zeta ' + \zeta \frac{1}{m-1} \frac{\eta'}{\eta} + \frac{C^{m-1}}{a} \zeta^m \frac{m}{m-1} \eta (N-2) k_1 \right ] \right . \\
& \left .-\zeta \frac{1}{m-1} \frac{\eta'}{\eta} - \frac{C^{m-1}}{a} \zeta^m \frac{m}{(m-1)^2} \frac{1}{\log(r_0)}\eta k_2 - C^{p-1} \zeta^p F^{\frac{p+m-2}{m-1}} \right \}
\end{aligned}
\end{equation}
From \eqref{eq5} and \eqref{coeff}, we have
\begin{equation}\label{eq6}
\bar u_t - \frac{1}{\rho}\Delta(\bar u^m)-\bar u^p\geq C F^{\frac{1}{m-1}-1} \left[\bar{\sigma}(t)F - \bar{\delta}(t) - \bar{\gamma}(t)F^{\frac{p+m-2}{m-1}}\right ]\,.
\end{equation}
For each $t>0$, set
$$\varphi(F):=\bar{\sigma}(t)F - \bar{\delta}(t) - \bar{\gamma}(t)F^{\frac{p+m-2}{m-1}}, \quad F\in (0,1)\,.$$
Now our goal is to find suitable $C,a,\zeta, \eta$ such that, for each $t>0$,
$$\varphi(F) \ge 0 \quad \text{for any }\, F\in (0,1)\,.$$
We observe that $\varphi(F)$ is concave in the variable $F$. Hence it is sufficient to have $\varphi(F)$ positive in the extrema of the interval $(0,1)$. This reduces, for any $t>0$, to the conditions
\begin{equation}
\begin{aligned}
&\varphi(0) \ge 0\,,  \\
&\varphi(1) \ge 0  \,.
\end{aligned}
\end{equation}
These are equivalent to
$$
-\bar\delta(t) \ge 0\,, \quad \bar\sigma(t)-\bar\delta(t)-\bar\gamma(t) \ge 0\,,
$$
that is
$$
\begin{aligned}
&-\frac{\eta'}{\eta^2} \ge \frac{C^{m-1}}{a} \zeta^{m-1} \frac{m}{m-1}\frac{1}{\log(r_0)} k_2\,, \\
&\zeta' + \frac{C^{m-1}}{a} \zeta^m \frac{m}{m-1} \eta \left[ \left(N-2\right) k_1 -\frac{k_2}{(m-1)\log(r_0)}\right] -C^{p-1}\zeta^p \ge 0\,,
\end{aligned}
$$
which are guaranteed by \eqref{hpC}, \eqref{cond1supercritical} and \eqref{cond2supercritical}. Hence we have proved that
$$
\bar{u}_t - \frac{1}{\rho}\Delta(\bar{u}^m)-\bar{u}^p \ge 0 \quad \text{in } \textit{D}_1\,.
$$
Now observe that
\begin{itemize}
\item[]$\bar{u} \in C(\R^N \times [0,+\infty))$\,,
\item[]$\bar{u}^m \in C^1([\R^N \setminus \{0\}] \times [0,+\infty))$\,, and by the definition of $\bar{u}$\,,
\item[]$\bar{u}\equiv0$ in $[\R^N\setminus \textit{D}_1] \times [0,+\infty))$\,.
\end{itemize}
Hence, by Lemma \ref{lemext} (applied with $\Omega_1=D_1$, $\Omega_2=\R^N\setminus D_1$, $u_1=\bar u$, $u_2=0$, $u=\bar u$), $\bar u$ is a supersolution of equation
$$
\bar{u}_t - \frac{1}{\rho}\Delta(\bar{u}^m)-\bar{u}^p = 0 \quad \text{in } (\R^N \setminus \{0\})\times (0,+\infty)
$$
in the sense of Definition \ref{soldom}. Thanks to a Kato-type inequality,  since $\bar u^m_r(0,t)\le 0$, we can easily infer that $\bar u$ is a supersolution of equation \eqref{equazionecritical}
 in the sense of Definition \ref{soldom}.
\end{proof}

\begin{remark}\label{thmSuperC}
Let $$p>m$$ and assumption \eqref{hpC} be satisfied. Let $\omega:=\frac{C^{m-1}}{a}$. In Theorem \ref{teosupersolutioncritical} the precise hypotheses on parameters $C>0$, $\omega>0$, $T>0$ are the following:
\begin{equation}\label{eq35}
\frac{p-m}{p-1} \ge \omega \frac{m}{m-1} k_2 \frac{1}{\log(r_0)},
\end{equation}
\begin{equation}\label{eq36}
\omega \frac{m}{m-1}\left [ k_1(N-2)-\frac{k_2}{(m-1)\log(r_0)}\right ] \ge C^{p-1} + \frac{1}{p-1}\,.
\end{equation}
\end{remark}

\begin{lemma}\label{lemmaCsuper}
All the conditions in Remark \ref{thmSuperC} can be satisfied simultaneously.
\end{lemma}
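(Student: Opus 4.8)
The plan is to produce explicit admissible values of the free parameters. Since $T>0$ appears in neither \eqref{eq35} nor \eqref{eq36}, it may be chosen arbitrarily, and the whole problem reduces to finding $C>0$ and $\omega>0$ that satisfy both inequalities at once. Introducing the shorthands
$$\omega_1:=\frac{p-m}{p-1}\,\frac{m-1}{m}\,\frac{\log r_0}{k_2},\qquad A:=\frac{m}{m-1}\left[k_1(N-2)-\frac{k_2}{(m-1)\log r_0}\right],$$
condition \eqref{eq35} is exactly $\omega\le\omega_1$, while \eqref{eq36} reads $\omega A\ge C^{p-1}+\frac{1}{p-1}$. Note $\omega_1>0$ because $p>m$ and $r_0>e$. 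Since the left-hand side of \eqref{eq36} is increasing in $\omega$ once $A>0$, the natural choice is the largest admissible value $\omega:=\omega_1$, for which \eqref{eq35} holds with equality.

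First I would check that \eqref{hpC} forces $A>0$. As $0<\frac{p-m}{p-1}<1$, assumption \eqref{hpC} gives in particular $\frac{k_2}{k_1}<(N-2)(m-1)\log r_0$, i.e.\ $k_1(N-2)>\frac{k_2}{(m-1)\log r_0}$, whence $A>0$. This both legitimises the choice $\omega=\omega_1$ and guarantees that the bracket defining $A$ is positive.

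The key computation is to simplify the product $\omega_1 A$: the factors $\frac{m-1}{m}$ and $\frac{m}{m-1}$ cancel, leaving
$$\omega_1 A=\frac{p-m}{p-1}\left[\frac{k_1(N-2)\log r_0}{k_2}-\frac{1}{m-1}\right].$$
A short manipulation then shows that $\omega_1 A>\frac{1}{p-1}$ is equivalent, after multiplying through by $\frac{p-1}{p-m}>0$, to $\frac{k_1(N-2)\log r_0}{k_2}>\frac{1}{m-1}+\frac{1}{p-m}=\frac{p-1}{(m-1)(p-m)}$, i.e.\ precisely to
$$\frac{k_2}{k_1}<(N-2)(m-1)\frac{p-m}{p-1}\log r_0,$$
which is assumption \eqref{hpC}. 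Thus \eqref{hpC} is exactly the statement that $\omega_1 A>\frac{1}{p-1}$; recognizing this equivalence is the heart of the argument, and it is the one step I expect to require genuine care.

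It remains to absorb the term $C^{p-1}$. Since $\omega_1 A-\frac{1}{p-1}>0$ by the previous step and $p>1$ forces $C^{p-1}\to 0$ as $C\to 0^+$, I may fix $C>0$ small enough that $C^{p-1}\le \omega_1 A-\frac{1}{p-1}$; then \eqref{eq36} holds for this $C$ together with $\omega=\omega_1$, while \eqref{eq35} holds by construction, and any $T>0$ completes the choice. The only real obstacle is the algebraic identification of \eqref{hpC} with $\omega_1 A>\frac{1}{p-1}$; once that is in hand, the smallness of $C$ does the rest.
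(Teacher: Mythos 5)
Your proposal is correct and follows essentially the same strategy as the paper: choose $\omega$ at the threshold permitted by \eqref{eq35}, use \eqref{hpC} to guarantee $\omega A>\frac{1}{p-1}$, and then shrink $C$ so that the term $C^{p-1}$ is absorbed (with $a$ fixed accordingly by $a=C^{m-1}/\omega$). The only difference is that you carry out explicitly the algebraic verification that \eqref{hpC} is equivalent to $\omega_1 A>\frac{1}{p-1}$, a step the paper merely asserts with ``By \eqref{hpC}, we can select $\omega>0$\dots''; your computation is accurate and nicely shows the assumption \eqref{hpC} is sharp for this construction.
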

\begin{proof}
Since $p>m$ the left-hand-side of \eqref{eq35} is positive. By \eqref{hpC}, we can select $\omega>0$ so that \eqref{eq35} holds and
$$
\omega \frac{m}{m-1}\left [ k_1(N-2)-\frac{k_2}{(m-1)\log(r_0)}\right ] > \frac{1}{p-1}\,.
$$
Then we take $C>0$ so small that \eqref{eq36} holds (and so $a>0$ is accordingly fixed).
\end{proof}

\begin{proof}[Proof of Theorem \ref{teosupersolutioncritical}]
We prove Theorem \ref{teosupersolutioncritical} by means of Proposition \ref{propsupersolutioncritical}. In view of Lemma \ref{lemmaCsuper}, we can assume that alla conditions in Remark \ref{thmSuperC} are fulfilled.
Set
$$
\zeta=(T+t)^{-\alpha}\,, \quad \eta=(T+t)^{-\beta}, \quad \text{for all } \quad t>0\,.
$$
Consider conditions \eqref{cond1supercritical}, \eqref{cond2supercritical} of Proposition \ref{propsupersolutioncritical} with this choice of $\zeta(t)$ and $\eta(t)$. Therefore we obtain
\begin{equation}\label{eq11}
\beta - \frac{C^{m-1}}{a}\frac{m}{m-1}k_2 (T+t)^{-\alpha(m-1)-\beta+1} \ge 0
\end{equation}
and
\begin{equation}\label{eq12}
\begin{aligned}
-\alpha (T+t)^{-\alpha-1}&+\frac{C^{m-1}}{a}\frac{m}{m-1}\left [k_1 (N-2) - \frac{k_2}{(m-1)\log(r_0)} \right ] (T+t)^{-\alpha m-\beta}\\ &-C^{p-1}(T+t)^{-\alpha p} \ge 0\,.
\end{aligned}
\end{equation}
We take
\begin{equation}\label{alphabeta}
\alpha=\frac{1}{p-1}\,, \quad \beta=\frac{p-m}{p-1}\,.
\end{equation}
Due to \eqref{alphabeta}, \eqref{eq11} and \eqref{eq12} become
\begin{equation}\label{eq14}
\frac{p-m}{p-1} \ge \frac{C^{m-1}}{a}\frac{m}{m-1}\frac{k_2}{\log(r_0)}\,,
\end{equation}
\begin{equation}\label{eq13}
\frac{C^{m-1}}{a}\frac{m}{m-1}\left[k_1(N-2)- \frac{k_2}{(m-1)\log(r_0)} \right ] \ge C^{p-1} + \frac{1}{p-1}\,.
\end{equation}
Therefore, \eqref{cond1supercritical} and \eqref{cond2supercritical} follow from assumptions \eqref{eq35} and \eqref{eq36}. Thus the conclusion follows by Propositions \ref{propsupersolutioncritical} and \ref{cpsup}.
\end{proof}

\subsection{Order of decaying: $q>2$}
We assume \eqref{hp}, \eqref{hpSup} and \eqref{hpS} for $q>2$ and \eqref{eq115}. In order to construct a suitable family of supersolutions of \eqref{equazionecritical}, we define, for all $(x,t)\in \R^N \times (0,+\infty)$,
\begin{equation}
{\bar{u}}(x,t)\equiv \bar{u}(r(x),t):=C\zeta(t)(r+r_0)^{-\frac{\bar b}{m}};
\label{eq110}
\end{equation}
where $\zeta \in C^1([0, +\infty); [0, +\infty))$ and $C > 0$, $r_0>0$.

\smallskip

Now, we compute
$$
\bar{u}_t - \frac{1}{\rho}\Delta(\bar{u}^m)-\bar{u}^p.
$$

For any $(x,t) \in \big[\R^N\setminus \{0\}\big]\times (0,+\infty)$, we have:

\begin{equation}
\bar{u}_t =C\,\zeta'\, (r+r_0)^{-\frac{b}{m}}\,.
\label{eq111}
\end{equation}
\begin{equation}
(\bar{u}^m)_r=-\,\bar b \,C^m \,\zeta^m \,(r+r_0)^{-\bar b -1}\,.
\label{eq112}
\end{equation}
\begin{equation}
(\bar{u}^m)_{rr}=\bar b\,(\bar b+1)\,C^m \,\zeta^m \,(r+r_0)^{-\bar b -2}\,.
\label{eq113}
\end{equation}

\begin{proposition}\label{propsupersolutionsuper}
Let $\zeta=\zeta(t) \in C^1[0,+\infty); [0, +\infty)), \zeta'\geq 0$. Assume \eqref{hp}, \eqref{hpSup} and \eqref{hpS} for $q>2$, \eqref{eq115}, and that
\begin{equation}
\bar bk_1(N-2-\bar b)C^m \zeta^m -\bar c \, C^p\zeta^p  > 0\,.
\label{eq114}
\end{equation}
Then $\bar{u}$ defined in \eqref{eq110} is a supersolution of equation \eqref{equazionecritical}.
\end{proposition}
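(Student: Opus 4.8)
The plan is to check directly that the function $\bar u$ in \eqref{eq110} satisfies $\bar u_t-\frac1\rho\Delta(\bar u^m)-\bar u^p\ge 0$ pointwise on $[\R^N\setminus\{0\}]\times(0,+\infty)$, and then to extend this differential inequality across the origin by a Kato-type argument, exactly as was done at the end of the proof of Proposition \ref{propsupersolutioncritical}. First I would assemble the quantities already recorded in \eqref{eq111}--\eqref{eq113}. Since by hypothesis $\zeta'\ge 0$, the time derivative $\bar u_t=C\zeta'(r+r_0)^{-\bar b/m}$ is nonnegative and may simply be discarded, as it only helps the supersolution inequality; the whole matter thus reduces to balancing the diffusion and the reaction terms.

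Next I would bound the radial Laplacian from above. Combining \eqref{eq112}--\eqref{eq113} and using $\frac{N-1}{r}\ge\frac{N-1}{r+r_0}$ together with $(\bar u^m)_r<0$, one obtains
\[
\Delta(\bar u^m)=(\bar u^m)_{rr}+\frac{N-1}{r}(\bar u^m)_r\le -\bar b\,(N-2-\bar b)\,C^m\zeta^m\,(r+r_0)^{-\bar b-2}\le 0,
\]
where nonpositivity is guaranteed by $\bar b<N-2$, one half of \eqref{hpS}. Because the right-hand side is nonpositive and $\frac1\rho>0$, I would multiply through by $\frac1\rho$ using the \emph{lower} density bound $\frac1\rho\ge k_1(r+r_0)^q$ from \eqref{hpSup}, which yields
\[
-\frac{1}{\rho}\Delta(\bar u^m)\ \ge\ \bar b\,k_1\,(N-2-\bar b)\,C^m\zeta^m\,(r+r_0)^{\,q-\bar b-2}.
\]

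Writing the reaction term as $\bar u^p=C^p\zeta^p(r+r_0)^{-\bar bp/m}$ and majorising it via \eqref{eq115}, the verification reduces to the pointwise inequality
\[
\bar b\,k_1\,(N-2-\bar b)\,C^m\zeta^m\,(r+r_0)^{\,q-\bar b-2}\ \ge\ \bar c\,C^p\zeta^p,\qquad r\ge 0 .
\]
Here the second half of \eqref{hpS}, namely $\bar b<q-2$, is decisive: it makes the exponent $q-\bar b-2$ strictly positive, so that $(r+r_0)^{q-\bar b-2}$ is bounded below (it is $\ge 1$ as soon as $r+r_0\ge1$; recall that $r_0$ is taken large, cf. Remark \ref{oss1} for the borderline case $p=m$), and the left-hand side is minorised by $\bar b\,k_1(N-2-\bar b)\,C^m\zeta^m$. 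The displayed inequality then follows immediately from the standing assumption \eqref{eq114}. I expect this step --- fusing the two mismatched powers $(r+r_0)^{\,q-\bar b-2}$ and $(r+r_0)^{-\bar bp/m}$ into the single $r$-independent condition \eqref{eq114} --- to be the main obstacle, and it is precisely the point at which the two-sided restriction $0<\bar b<\min\{N-2,\,q-2\}$ of \eqref{hpS} and the constant $\bar c$ of \eqref{eq115} enter.

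Finally, the computation above is legitimate only for $r>0$, since $\bar u^m=C^m\zeta^m(|x|+r_0)^{-\bar b}$ is merely Lipschitz at the origin. As in Proposition \ref{propsupersolutioncritical}, I would note that $\bar u\in C(\R^N\times[0,+\infty))$, that $\bar u^m\in C^1([\R^N\setminus\{0\}]\times[0,+\infty))$, and that $(\bar u^m)_r(0^+,t)=-\bar b\,C^m\zeta^m\,r_0^{-\bar b-1}\le 0$, so that the corner at the origin contributes a nonpositive singular term to $\Delta(\bar u^m)$. Applying Lemma \ref{lemext} (with $\Omega_1,\Omega_2$ an inner and an outer region and $\bar u$ glued across a sphere centred at the origin) together with the Kato-type inequality, one upgrades the pointwise estimate on $\R^N\setminus\{0\}$ to the assertion that $\bar u$ is a supersolution of \eqref{equazionecritical} on all of $\R^N$, in the sense of Definition \ref{soldom}.
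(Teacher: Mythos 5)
Your proof is correct and takes essentially the same route as the paper's: the same radial computation (absorbing $\frac{N-1}{r}\ge\frac{N-1}{r+r_0}$ into the Laplacian to get the factor $N-2-\bar b$), the same use of $\frac1\rho\ge k_1(r+r_0)^q$ together with $(r+r_0)^{q-\bar b-2}\ge 1$, the same majorisation of the reaction term by $\bar c$ via \eqref{eq115}, reduction to \eqref{eq114}, and the Kato-type inequality at the origin using $(\bar u^m)_r(0^+,t)\le 0$. The only cosmetic difference is your invocation of Lemma \ref{lemext}, which is superfluous here since $\bar u>0$ everywhere (there is no free boundary to glue across, unlike in Proposition \ref{propsupersolutioncritical}); the paper relies on the Kato-type inequality alone, and your remark that $(r+r_0)^{q-\bar b-2}\ge1$ needs $r+r_0\ge1$ is a caveat the paper itself glosses over in the same way.
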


\begin{proof}[Proof of Proposition \ref{propsupersolutionsuper}]
In view of \eqref{eq111}, \eqref{eq112}, \eqref{eq113} and the fact that
\[\frac1{(r+r_0)^{\bar b+1}r}\geq \frac1{(r+r_0)^{\bar b+2}}\quad \textrm{for any}\,\, x\in \mathbb R^N,\]
we get, for any $(x,t)\in (\R^N\setminus \{0\})\times (0,+\infty)$,
\begin{equation}\label{eq120}
\begin{aligned}
&\bar{u}_t-\frac 1 {\rho} \Delta(\bar{u}^m) -\bar{u}^p\\
&\geq C\zeta'(r+r_0)^{-\frac{\bar b}{ m}} + \frac 1 {\rho} \left \{(N-2-\bar b)C^m \zeta^m \bar b (r+r_0)^{-\bar b-2}\right \} - C^p \zeta^p (r+r_0)^{-\frac{\bar bp}{m}}.
\end{aligned}
\end{equation}
Thanks to hypothesis \eqref{hpSup}, \eqref{hpS} and \eqref{eq115}, we have
\begin{equation}\label{eq121}
\begin{aligned}
&\frac{(r+r_0)^{-\bar b-2}}{\rho} \ge k_1(r+r_0)^{-\bar b-2+q}=k_1\,,\\
&-(r+r_0)^{-\frac{\bar bp}{m}} \ge - \bar c
\end{aligned}
\end{equation}
Since $\zeta'\geq 0$, from \eqref{eq120} and \eqref{eq121} we get
\begin{equation}\label{eq122}
\bar{u}_t-\frac \ {\rho} \Delta(\bar{u}^m) -\bar{u}^p\ge k_1 \bar b (N-2-\bar b)C^m\zeta^m - \bar{c} \,C^p \zeta^p\,.
\end{equation}
Hence we get the condition
\begin{equation}\label{eq123}
k_1 \bar b (N-2-\bar b)C^m\zeta^m - \bar{c} \,C^p \zeta^p \ge 0\,,
\end{equation}
which is guaranteed by \eqref{hpS} and \eqref{eq114}. Hence we have proved that
$$
\bar{u}_t-\frac 1 {\rho} \Delta(\bar{u}^m) -\bar{u}^p \ge 0 \quad \text{in }\,\, (\R^N\setminus \{0\})\times (0, +\infty)\,.
$$
Now observe that
$$
\begin{aligned}
&\bar u \in C(\R^N\times [0,+\infty))\,,\\
&\bar u^m \in C^1([\R^N \setminus \{0\}]\times [0,+\infty))\,, \\
& \bar u_r^m(0,t)\le 0\,.
\end{aligned}
$$
Hence, thanks to a Kato-type inequality we can infer that $\bar u$ is a supersolution to equation \eqref{equazionecritical}
in the sense of Definition \ref{soldom}.

\end{proof}

\begin{remark}\label{thmSuperS}
Let $$q>2$$ and assumption \eqref{hpS} be satisfied. In Theorem \ref{teosupersolutionsuper} the precise hypotheses on parameters $\alpha$, $C>0$, $T>0$ are as follows.
\begin{itemize}
\item[(a)] Let $p<m$. We require that
\begin{equation}\label{eq125}
\alpha>0,
\end{equation}
\begin{equation}\label{eq126}
\bar b\,k_1(N-2-\bar b)C^m\,- \bar c \,C^p \, \ge 0
\end{equation}
\item[(b)] Let $p>m$. We require that
\begin{equation}\label{eq127}
\alpha=0,
\end{equation}
\begin{equation}\label{eq128}
\bar b\,k_1(N-2-\bar b)C^m - \bar c \,C^p\ge 0
\end{equation}
\end{itemize}
\end{remark}

\begin{lemma}\label{lemmaSsuper}
All the conditions in Remark \ref{thmSuperS} can hold simultaneously.
\end{lemma}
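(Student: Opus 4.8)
The only genuinely restrictive requirement in Remark \ref{thmSuperS} is the coercivity-type inequality, which in both case (a) and case (b) reads the same,
\[ \bar b\, k_1 (N-2-\bar b)\, C^m - \bar c\, C^p \ge 0; \]
the remaining conditions merely prescribe the value of $\alpha$ (namely $\alpha>0$ in case (a) and $\alpha=0$ in case (b)) and impose no coupling between $\alpha$ and $C$. So the plan is to fix $\alpha$ as prescribed and then to select $C>0$ so that the displayed inequality holds, treating the two cases separately.

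First I would record the sign of the leading coefficient. Hypothesis \eqref{hpS} gives $0<\bar b<N-2$, whence $N-2-\bar b>0$ and therefore $\bar b\, k_1(N-2-\bar b)>0$; moreover $\bar c>0$ by \eqref{eq115}. Dividing the inequality by $C^m>0$, it is equivalent to
\[ \bar b\, k_1(N-2-\bar b) \ge \bar c\, C^{\,p-m}. \]

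The decisive observation is that the admissible range of $C$ is governed entirely by the sign of $p-m$. In case (a), where $p<m$, we have $p-m<0$, so $C^{\,p-m}\to 0$ as $C\to+\infty$; choosing any $\alpha>0$ (to meet \eqref{eq125}) and then $C$ large enough forces $\bar c\, C^{\,p-m}\le \bar b\, k_1(N-2-\bar b)$, which is exactly \eqref{eq126}. In case (b), where $p>m$, we have $p-m>0$, so now $C^{\,p-m}\to 0$ as $C\to 0^+$; fixing $\alpha=0$ (to meet \eqref{eq127}) and then taking $C$ small enough yields \eqref{eq128}. In both situations the prescribed value of $\alpha$ together with the chosen $C$ satisfies all the requirements at once.

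I do not expect any real obstacle here, as the statement is essentially a consistency check on the parameters. The only point deserving care is that one must pass to ``$C$ large'' in case (a) but ``$C$ small'' in case (b) — the opposite monotonicity in $C$ reflecting the sign of $p-m$ — and that the positivity of the coefficient $\bar b\, k_1(N-2-\bar b)$, on which the whole argument rests, is precisely what \eqref{hpS} guarantees.
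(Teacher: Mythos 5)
Your proof is correct and follows essentially the same route as the paper: in case (a) one uses $N-2-\bar b>0$ (from \eqref{hpS}) and takes $C$ large, while in case (b) one takes $C$ small, the sign of $p-m$ dictating which limit makes $\bar c\,C^{p-m}$ negligible. Your reduction to $\bar b\,k_1(N-2-\bar b)\ge \bar c\,C^{p-m}$ merely spells out the paper's (terser) argument in more detail.
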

\begin{proof}
(a) We observe that, due to \eqref{hpS}, $$N-2-\bar b>0.$$ Therefore, we can select $C>0$ sufficiently large to guarantee \eqref{eq126}.\newline
(b) We choose $C>0$ sufficiently small to guarantee \eqref{eq128}.
\end{proof}

\begin{proof}[Proof of Theorem \ref{teosupersolutionsuper}]
We now prove Theorem \ref{teosupersolutionsuper} in view of Proposition \ref{propsupersolutionsuper}. In view of Lemma \ref{lemmaSsuper} we can assume that all conditions in Remark \ref{thmSuperS} are fulfilled.
Set
$$
\zeta(t)=(T+t)^{\alpha}, \quad \text{for all} \quad t \ge 0\,.
$$

Let $p<m$. Inequality \eqref{eq114} reads
\[\bar b\,k_1(N-2-\bar b)C^m(T+t)^{m\alpha}\,- \bar c \,C^p(T+t)^{p\alpha} \, \ge 0\quad \textrm{for all }\,\, t>0\,.\]
This follows from \eqref{eq125} and \eqref{eq126}, for $T>1$. Hence, by Propositions \ref{propsupersolutionsuper} and \ref{prop1} the thesis follows in this case.

Let $p>m$. Conditions \eqref{eq127} and \eqref{eq128} are equivalent to \eqref{eq114}. Hence, by Propositions \ref{propsupersolutionsuper} and \ref{prop1} the thesis follows in this case too. The proof is complete.

\end{proof}

\section{Blow-up: proofs}\label{proofsectionB}

In what follows we set $r\equiv |x|$. We construct a suitable family of subsolutions of equation
\begin{equation}\label{equazionecritical2}
u_t =\frac{1}{\rho(x)}\Delta(u^m)+u^p \quad \text{ in } \R^N\times(0, T).
\end{equation}

\subsection{Order of decaying: $q=2$}

Suppose \eqref{hp}, \eqref{hpSub} and \eqref{hpCsub}. To construct a suitable family of subsolution of \eqref{equazionecritical2}, we define, for all $(x,t)\in [\R^N \setminus B_{e}] \times (0,T)$,
\begin{equation}
{\underline{u}}(x,t)\equiv \underline{u}(r(x),t):=C\zeta(t)\left [1-\frac{\log(r)}{a}\eta(t)\right]_{+}^{\frac{1}{m-1}},
\label{subcritical}
\end{equation}
and
\begin{equation}\label{w}
w(x,t)\equiv w(r(x),t) :=
\begin{cases}
\underline u(x,t) \quad \text{in } [\R^N \setminus B_{e}] \times (0,T), \\
v(x,t) \quad \text{in } B_{e} \times (0,T),
\end{cases}
\end{equation}
where
\begin{equation}
v(x,t) \equiv v(r(x),t):= C\zeta(t) \left [ 1-\frac{r^2+e^2}{2e^2} \frac{\eta}{a} \right ]^{\frac{1}{m-1}}_{+}\,.
\label{vcritical}
\end{equation}
Let us set
$$
F(r,t):= 1-\frac{\log(r)}{a}\eta(t)\,,
$$
and
$$
G(r,t):= 1-\frac{r^2+e^2}{2e^2}\frac{\eta(t)}{a}\,.
$$
For any $(x,t) \in (\R^N\setminus B_e) \times (0,T)$, we have:
\begin{equation}
\begin{aligned}
\underline{u}_t &=C\zeta ' F^{\frac{1}{m-1}} + C\zeta \frac{1}{m-1} F^{\frac{1}{m-1}-1} \left ( -\frac{\log(r)}{a} \eta ' \right )= \\
&=C\zeta ' F^{\frac{1}{m-1}} + C\zeta \frac{1}{m-1} \left (1-\frac{\log(r)}{a} \eta \right ) \frac{\eta'}{\eta}F^{\frac{1}{m-1}-1} - C\zeta \frac{1}{m-1} \frac{\eta'}{\eta} F^{\frac{1}{m-1}-1}= \\
&=C\zeta ' F^{\frac{1}{m-1}} + C\zeta \frac{1}{m-1} \frac{\eta'}{\eta}F^{\frac{1}{m-1}} - C\zeta \frac{1}{m-1} \frac{\eta'}{\eta} F^{\frac{1}{m-1}-1}. \\
\end{aligned}
\label{dertempocriticalsub}
\end{equation}
\begin{equation}
(\underline{u}^m)_r=-\frac{C^m}{a} \zeta^m \frac{m}{m-1} F^{\frac{1}{m-1}} \frac{1}{r}\eta.
\label{derprimacriticalsub}
\end{equation}
\begin{equation}
\begin{aligned}
(\underline{u}^m)_{rr}&=-C^m \zeta^m \frac{m}{(m-1)^2} F^{\frac{1}{m-1}-1} \left(1-\frac{\log(r)}{a} \eta\right ) \eta \frac{1}{r^2\log(r)} \\ &+ \frac{C^m}{a} \zeta^m \frac{m}{(m-1)^2} F^{\frac{1}{m-1}-1} \frac{\eta}{r^2 \log(r)} + \frac{C^m}{a} \zeta^m \frac{m}{m-1} F^{\frac{1}{m-1}} \frac{1}{r^2}\eta=\\
&=-\frac{C^m}{a} \zeta^m \frac{m}{(m-1)^2} F^{\frac{1}{m-1}} \eta \frac{1}{r^2\log(r)}\\& + \frac{C^m}{a} \zeta^m \frac{m}{(m-1)^2} F^{\frac{1}{m-1}-1} \frac{\eta}{r^2 \log(r)} + \frac{C^m}{a} \zeta^m \frac{m}{m-1} F^{\frac{1}{m-1}} \frac{1}{r^2}\eta.
\end{aligned}
\label{dersecondacriticalsub}
\end{equation}
For any $(x,t) \in B_e \times (0,T)$, we have:
\begin{equation}
\begin{aligned}
v_t &=C\zeta ' G^{\frac{1}{m-1}} + C\zeta \frac{1}{m-1} G^{\frac{1}{m-1}-1} \left ( -\frac{r^2+e^2}{2e^2} \frac{\eta'}{a} \right )= \\
&=C\zeta ' G^{\frac{1}{m-1}} + C\frac{\zeta}{m-1} \left (1-\frac{r^2+e^2}{2e^2} \frac{\eta}{a} \right ) \frac{\eta'}{\eta}G^{\frac{1}{m-1}-1} - C\zeta \frac{1}{m-1} \frac{\eta'}{\eta} G^{\frac{1}{m-1}-1}= \\
&=C\zeta ' G^{\frac{1}{m-1}} + C\zeta \frac{1}{m-1} \frac{\eta'}{\eta}G^{\frac{1}{m-1}} - C\zeta \frac{1}{m-1} \frac{\eta'}{\eta} G^{\frac{1}{m-1}-1}\,.
\end{aligned}
\label{A}
\end{equation}
\begin{equation}
(v^m)_r=-\frac{C^m}{a} \zeta^m \frac{m}{m-1} G^{\frac{1}{m-1}} \frac{r}{e^2}\eta\,.
\label{B}
\end{equation}
\begin{equation}
(v^m)_{rr}=-C^m \zeta^m \frac{m}{m-1} G^{\frac{1}{m-1}} \frac 1{e^2} \frac {\eta}{a}+ \frac{C^m}{a^2} \zeta^m \frac{m}{(m-1)^2} G^{\frac{1}{m-1}-1} \eta^2 \frac{r^2}{e^4}\,.
\label{C}
\end{equation}
We also define
\begin{equation}
\begin{aligned}
& \underline\sigma(t) := \zeta ' + \zeta \frac{1}{m-1} \frac{\eta'}{\eta} + \frac{C^{m-1}}{a} \zeta^m \frac{m}{m-1} \eta k_2 \left (N-2+ \frac{1}{m-1} \right )\,,\\
&\underline \delta(t) := \zeta \frac{1}{m-1} \frac{\eta'}{\eta}\,,  \\
& \underline\gamma(t):=C^{p-1} \zeta^p, \\
& \underline \sigma_0(t) := \zeta'+\frac{\zeta}{m-1} \frac{\eta'}{\eta} + \frac{N}{e^2}\rho_2 \frac{C^{m-1}}{a} \zeta^{m} \frac{m}{m-1} \eta\, , \\
& K:= \left (\frac{m-1}{p+m-2}\right)^{\frac{m-1}{p-1}} - \left (\frac{m-1}{p+m-2}\right)^{\frac{p+m-2}{p-1}}>0.
\end{aligned}
\label{coeff2}
\end{equation}

\begin{proposition}
Let $p>m$. Let $T\in (0,\infty)$, $\zeta$, $\eta \in C^1([0,T); [0, T))$.
Let $\underline{\sigma}$, $\underline{\delta}$, $\underline{\gamma}$, $\underline{\sigma}_0$, $\textit{K}$ be defined in \eqref{coeff2}. Assume that, for all $t\in(0,T)$,
\begin{equation}\label{eq40}
\underline\sigma(t)>0, \quad K[\underline\sigma(t)]^{\frac{p+m-2}{p-1}} \le \underline\delta(t) \underline\gamma(t)^{\frac{m-1}{p-1}}\,,
\end{equation}
\begin{equation}\label{eq41}
(m-1)\underline \sigma(t) \le (p+m-2) \underline\gamma(t)\,.
\end{equation}
\begin{equation}\label{eq42}
\underline \sigma_0(t)>0, \quad K[\underline{\sigma}_0](t)^{\frac{p+m-2}{p-1}} \le \underline{\delta}(t) \underline{\gamma}(t)^{\frac{m-1}{p-1}}, \end{equation}
\begin{equation}\label{eq43}
(m-1) \underline{\sigma}_0(t) \le (p+m-2) \underline{\gamma}(t)\,.
\end{equation}
Then $w$ defined in \eqref{w} is a  subsolution of equation \eqref{equazionecritical2}.
\label{propsubsolutioncritical}
\end{proposition}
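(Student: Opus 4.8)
The plan is to prove the differential inequality $w_t-\frac1\rho\Delta(w^m)-w^p\le 0$ separately on the two pieces $\R^N\setminus\overline{B_e}$ (where $w=\underline u$) and $B_e$ (where $w=v$), to check that the two pieces match to first order across $\partial B_e$, and then to invoke Lemma \ref{lemext}(ii) to deduce that the glued function $w$ is a genuine subsolution of \eqref{equazionecritical2}. The whole argument mirrors the supersolution Proposition \ref{propsupersolutioncritical}, with every inequality reversed and with the density bounds used in the opposite direction.

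For the \emph{outer region} I would start from \eqref{dertempocriticalsub}, \eqref{derprimacriticalsub}, \eqref{dersecondacriticalsub} and $\Delta(\underline u^m)=\frac{N-1}{r}(\underline u^m)_r+(\underline u^m)_{rr}$, collecting the $\frac{N-1}{r}$ and $\frac{1}{r^2}$ contributions into a single $(N-2)$–coefficient exactly as in \eqref{laplaciano}. Since we now need an \emph{upper} bound, on $\R^N\setminus B_e$ I use \eqref{hpSub} with $q=2$ in the form $\frac1{\rho r^2}\le k_2$ together with $\frac1{\log r}\le 1$ (valid for $r>e$), and I discard the genuinely nonpositive contribution $-\frac1\rho\,\frac{C^m}{a}\zeta^m\frac{m}{(m-1)^2}F^{\frac1{m-1}-1}\frac{\eta}{r^2\log r}$ coming from the $F^{\frac1{m-1}-1}$ term of the Laplacian (dropping a nonpositive term preserves ``$\le$''); this is precisely why $\underline\delta$ in \eqref{coeff2} is simpler than its supersolution counterpart. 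After factoring out $C F^{\frac1{m-1}-1}>0$ this gives
\[
\underline u_t-\tfrac1\rho\Delta(\underline u^m)-\underline u^p \;\le\; C F^{\frac1{m-1}-1}\Big[\underline\sigma(t)F-\underline\delta(t)-\underline\gamma(t)F^{\frac{p+m-2}{m-1}}\Big],
\]
with $\underline\sigma,\underline\delta,\underline\gamma$ as in \eqref{coeff2}; the combination $k_2\big(N-2+\frac1{m-1}\big)$ in $\underline\sigma$ is exactly the $(N-2)$ term plus the $\frac1{m-1}$ term produced after using $\frac1{\log r}\le1$.

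For the \emph{inner region} I repeat the computation using \eqref{A}, \eqref{B}, \eqref{C} for $v$ and $G$, now applying \eqref{rho} in the form $\frac1\rho\le\rho_2$ on the positive $\frac{N}{e^2}$ term and again discarding the nonpositive $G^{\frac1{m-1}-1}$ term; this yields the same structure with $\underline\sigma_0$ in place of $\underline\sigma$. The key compatibility check is at $r=e$: since $\log e=1$ one has $F(e,t)=1-\frac{\eta}{a}=G(e,t)$, so $\underline u$ and $v$ agree there, and evaluating \eqref{derprimacriticalsub} and \eqref{B} gives $(\underline u^m)_r(e,t)=(v^m)_r(e,t)$. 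Taking $\Omega_1=B_e$, $\Omega_2=\R^N\setminus\overline{B_e}$, $u_1=v$, $u_2=\underline u$, the outward normal to $\Omega_1$ is radial, so both the continuity condition and the flux condition $\frac{\partial v^m}{\partial n}\le\frac{\partial\underline u^m}{\partial n}$ of \eqref{eq186b} hold (with equality). The free boundary where $F$ or $G$ vanishes is harmless, since $F_+^{m/(m-1)}$ is $C^1$ across it (its radial derivative tends to $0$), and $v$ is smooth at the origin with $(v^m)_r(0,t)=0$, so a Kato-type inequality disposes of $r=0$ as in Proposition \ref{propsupersolutioncritical}.

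It then remains to force $\psi(F):=\underline\sigma F-\underline\delta-\underline\gamma F^{\frac{p+m-2}{m-1}}\le0$ for $F\in(0,1)$, and likewise $\psi_0$ with $\underline\sigma_0$. In contrast with the supersolution case, where concavity allowed checking only the endpoints to obtain ``$\ge0$'', here we want the concave function to stay below zero, so the binding value is its interior maximum. With $\theta:=\frac{p+m-2}{m-1}>1$ the critical point is $F^*=(\underline\sigma/(\theta\underline\gamma))^{1/(\theta-1)}$; condition \eqref{eq41} is exactly $F^*\le1$, placing this maximum inside $(0,1]$, while a short computation gives $\psi(F^*)=\frac{\theta-1}{\theta}\underline\sigma F^*-\underline\delta$, so that $\psi(F^*)\le0$ is equivalent to $K\,[\underline\sigma]^{\frac{p+m-2}{p-1}}\le\underline\delta\,\underline\gamma^{\frac{m-1}{p-1}}$, i.e. to the second inequality in \eqref{eq40}, with $K=\frac{\theta-1}{\theta}\theta^{-1/(\theta-1)}$ the constant in \eqref{coeff2}. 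The identical argument with $\underline\sigma_0$ and \eqref{eq42}, \eqref{eq43} handles $B_e$. I expect the genuinely delicate point to be this maximization step combined with the flux matching at $r=e$: the logarithmic outer profile and the quadratic inner profile $v$ are chosen precisely so that values and fluxes agree at $r=e$, which is what makes Lemma \ref{lemext}(ii) applicable, and the concavity/maximum computation is the mechanism turning the four scalar conditions \eqref{eq40}–\eqref{eq43} into sufficiency.
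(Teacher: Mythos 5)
Your proposal is correct and follows essentially the same route as the paper's own proof: the same outer/inner splitting with the bounds $\tfrac{1}{\rho r^2}\le k_2$, $\tfrac{1}{\log r}\le 1$ and $\tfrac1\rho\le\rho_2$, the same discarding of the nonpositive $F^{\frac{1}{m-1}-1}$ (resp. $G^{\frac{1}{m-1}-1}$) terms, the same reduction to the concave functions $\varphi(F)$, $\psi(G)$ maximized at the interior critical point (your identity $K=\tfrac{\theta-1}{\theta}\theta^{-1/(\theta-1)}$ agrees with \eqref{coeff2}, and \eqref{eq41}, \eqref{eq43} are exactly $F_0\le 1$), and the same value-and-flux matching at $r=e$ feeding into Lemma \ref{lemext}(ii). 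The only cosmetic difference is that the paper handles the free boundary $\{F=0\}$ by an explicit extra application of Lemma \ref{lemext} with $u_2=0$, where you simply note the $C^1$ matching of $\underline u^m$ across it.
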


\begin{proof}[Proof of Proposition \ref{propsubsolutioncritical}]
Let $\underline{u}$ be as in \eqref{subcritical} and set
$$
\textit{D}_2:=\left \{ (x,t) \in (\R^N \setminus B_e) \times (0,T)\,\, |\,\, 0<F(r,t)<1 \right \}.
$$
In view of \eqref{dertempocriticalsub}, \eqref{derprimacriticalsub}, \eqref{dersecondacriticalsub}, we obtain, for all $(x,t)\in D_2$,
$$
\begin{aligned}
&\underline{u}_t - \frac{1}{\rho}\Delta(\underline {u}^m)- \underline{ u}^p\\
&=C\zeta ' F^{\frac{1}{m-1}} + C\zeta \frac{1}{m-1} \frac{\eta'}{\eta}F^{\frac{1}{m-1}} - C\zeta \frac{1}{m-1} \frac{\eta'}{\eta} F^{\frac{1}{m-1}-1}\\ & + F^{\frac{1}{m-1}} \frac{C^m}{a} \zeta^m \frac{m}{m-1} \eta \frac{1}{\rho r^2} \left ( \frac{1}{(m-1)\log(r)} +N-1 \right) - \frac{C^m}{a}\zeta^m \frac{m}{(m-1)^2} F^{\frac{1}{m-1}-1} \frac{\eta}{\log(r)} \frac{1}{\rho r^2}\\& -C^p \zeta^p F^{\frac{p}{m-1}}.
\end{aligned}
$$
In view of hypotheses \eqref{hpSub} and \eqref{hpCsub}, we can infer that
\begin{equation}\label{eq44}
\frac{1}{\rho r^2} \le k_2\,, \quad -\frac{1}{\rho r^2} \le -k_1\, \quad \text{for all }\,\,\ x\in \R^N\setminus B_e\,.
\end{equation}
Moreover,
\begin{equation}\label{eq45}
-1 \le -\frac{1}{\log(r)} \le 0\,, \quad \frac{1}{\log(r)} \le 1\,, \quad \text{for all }\,\,\ x\in \R^N\setminus B_e\,.
\end{equation}
From \eqref{eq44} and \eqref{eq45} we have
\begin{equation}\label{eq46}
\begin{aligned}
&\underline u_t - \frac{1}{\rho}\Delta(\underline u^m)- \underline u^p\\
&\le CF^{\frac{1}{m-1}-1} \left \{F\left [\zeta ' + \zeta\, \frac{1}{m-1}\, \frac{\eta'}{\eta} + \frac{C^{m-1}}{a}\, \zeta^m \,\frac{m}{m-1} \,\eta\, k_2 \right .\right.\\
&\,\,\,\,\,\left .\times \left(N-2+ \frac{1}{m-1} \right ) \right ] \left . -\zeta \frac{1}{m-1} \frac{\eta'}{\eta}  - C^{p-1} \zeta^p F^{\frac{p+m-2}{m-1}} \right \}.
\end{aligned}
\end{equation}
Thanks to \eqref{coeff2} and \eqref{eq46}
\begin{equation}\label{eqf1}
\underline u_t - \frac{1}{\rho}\Delta(\underline u^m)- \underline u^p \leq CF^{\frac{1}{m-1}-1}\varphi(F),
\end{equation}
where
\begin{equation}\label{eq47}
\varphi(F):=\underline\sigma(t)F - \underline\delta(t) - \underline\gamma(t)F^{\frac{p+m-2}{m-1}}.
\end{equation}
Due to \eqref{eqf1}, our goal is to find suitable $C>0$, $a>0$, $\zeta$, $\eta$ such that
$$
\varphi(F) \le 0\,, \quad \text{for all}\,\,  F \in (0,1)\,.
$$
To this aim, we impose that
$$
\sup_{F\in (0,1)}\varphi(F)=\max_{F\in (0,1)}\varphi(F)= \varphi (F_0)\leq 0\,,
$$
for some $F_0\in(0,1)$. We have
$$
\begin{aligned}
\frac{d \varphi}{dF}=0
&\iff \underline\sigma(t) - \frac{p+m-2}{m-1}\underline \gamma(t) F^{\frac{p-1}{m-1}} =0 \\ & \iff F_0= \left [\frac{m-1}{p+m-2} \frac{\underline\sigma(t)}{\underline\gamma(t)} \right ]^{\frac{m-1}{p-1}}\,.
\end{aligned}
$$
Then,
$$
\varphi(F_0)= K \dfrac{\underline\sigma(t)^{\frac{p+m-2}{p-1}}}{\underline\gamma(t)^{\frac{m-1}{p-1}}} - \underline\delta(t)
$$
where the coefficient $K=K(m,p)$ has been defined in \eqref{coeff2}.
By hypotheses \eqref{eq40} and \eqref{eq41}
\begin{equation}\label{eq50}
\varphi(F_0) \le 0\,, \quad 0<F_0 \le 1\,.
\end{equation}
So far, we have proved that
\begin{equation}\label{eq51}
\underline{u}_t-\frac{1}{\rho(x)}\Delta(\underline{u}^m)-\underline{u}^p \le 0 \quad \text{ in } D_2\,.
\end{equation}
Furthermore, since $\underline{u}^m\in C^1([\R^N\setminus B_e]\times[0,T))$, due to Lemma \ref{lemext} (applied with $\Omega_1=D_2, \Omega_2=\R^N\setminus[B_e\cup D_2], u_1=\underline u, u_2=0, u=\underline u$), it follows that $\underline u$ is a subsolution to equation
\begin{equation}\label{eq58}
\underline{u}_t-\frac{1}{\rho(x)}\Delta(\underline{u}^m)-\underline{u}^p = 0 \quad \text{ in } [\R^N \setminus B_e]\times (0,T)\,,
\end{equation}
in the sense of Definition \ref{soldom}.

Let
$$
D_3:= \left \{ (x,t) \in B_e \times (0,T) \,\,|\,\, 0<G<1\right \}\,.
$$
In view of \eqref{A}, \eqref{B} and \eqref{C}, for all $(x,t)\in D_3$,
\begin{equation}\label{eq56}
\begin{aligned}
&v_t-\frac{1}{\rho(x)}\Delta(v^m)-v^p\\
&=CG^{\frac 1{m-1}-1}\left \{G\left [\zeta'+\frac{\zeta}{m-1}\frac{\eta'}{\eta}+\frac 1 {\rho} \frac{C^{m-1}}{a} \zeta^m \frac m {m-1} \frac{N-1}{e^2} \eta \frac 1 {\rho} \frac{C^{m-1}}{a} \zeta^m \frac m {m-1} \frac{1}{e^2} \eta \right ]  \right .  \\
& + \left . - \frac{\zeta}{m-1}\frac{\eta'}{\eta} -  \frac 1 {\rho} \frac{C^{m-1}}{a^2} \zeta^m \frac m {(m-1)^2} \frac{r^2}{e^4} \eta^2 - C^{p-1}\zeta^p G^{\frac{p+m-2}{m-1}} \right \}
\end{aligned}
\end{equation}
Using \eqref{rho}, \eqref{eq56} yield, for all $(x,t)\in D_3$,
\begin{equation}\label{eq57}
\begin{aligned}
v_t - &\frac{1}{\rho}\Delta(v^m)-v^p \\
&\le CG^{\frac 1{m-1}-1}\left \{G\left [\zeta'+\frac{\zeta}{m-1}\frac{\eta'}{\eta}+\rho_2\frac{C^{m-1}}{a} \zeta^m \frac m {m-1} \frac{N}{e^2} \eta \right ] \right . \\& \left .- \frac{\zeta}{m-1}\frac{\eta'}{\eta} - C^{p-1}\zeta^p G^{\frac{p+m-2}{m-1}} \right \}\,.
\end{aligned}
\end{equation}
Thanks to \eqref{coeff} and \eqref{eq57},
\begin{equation}\label{eqf2}
v_t - \frac{1}{\rho}\Delta(v^m)-v^p \leq CG^{\frac 1{m-1}-1} \psi(G),
\end{equation}
where
\begin{equation}\label{eq58b}
\psi(G):=\underline\sigma_0(t)G - \underline\delta(t) - \underline\gamma(t)G^{\frac{p+m-2}{m-1}}.
\end{equation}
Now, by the same arguments used to obtain \eqref{eq58}, in view of \eqref{eq42} and \eqref{eq43} we can infer that
$$
\psi(G)\le 0\, \quad 0<G\le 1\,.
$$
So far, due to \eqref{eqf2}, we have proved that
\begin{equation}\label{eq59}
v_t-\frac{1}{\rho(x)}\Delta(v^m)-v^p \le 0 \quad \text{ for any }\,\, (x,t)\in D_3\,.
\end{equation}
Moreover, by Lemma \ref{lemext} $v$ is a subsolution of equation
\begin{equation}\label{eq60}
v_t-\frac{1}{\rho(x)}\Delta(v^m)-v^p = 0 \quad \text{ in }\,\, B_e\times(0,T)\,,
\end{equation}
in the sense of Definition \ref{soldom}. Now, observe that $w \in C(\R^N \times [0,T))$, indeed,
$$
\underline{u}  = v = C \zeta(t)\left [ 1- \frac{\eta(t)}{a} \right ]_+^{\frac{1}{m-1}} \quad \text{in}\,\, \partial B_e\times (0,T)\,.
$$
Moreover, $w^m \in C^1(\R^N \times [0,T))$, indeed,
$$
(\underline{u}^m)_r = (v^m)_r  = -C^m \zeta(t)^m \frac{m}{m-1} \frac{1}{e}\frac{\eta(t)}{a} \left [ 1- \frac{\eta(t)}{a} \right ]_+^{\frac{1}{m-1}} \quad \text{in}\,\, \partial B_e\times (0,T)\,.
$$
Hence, by Lemma \ref{lemext} again, $w$ is a subsolution to equation \eqref{equazionecritical2} in the sense of Definition \ref{soldom}.
\end{proof}

\begin{remark}\label{thmSubC}
Let $$p>m\,,$$ and assumptions \eqref{hpSub} and \eqref{hpCsub} be satisfied. Let define $\omega:= \frac{C^{m-1}}{a}$. In Theorem \ref{teosubsolutioncritical}, the precise hypotheses on parameters $C>0$, $a>0$, $\omega>0$ and $T>0$ is the following.
\begin{equation}\label{eq61}
\max \left \{1+mk_2\frac{C^{m-1}}{a} \left (N-2+\frac{1}{m-1}\right )\,; 1+m\rho_2\frac{C^{m-1}}{a} \frac{N}{e^2}\right \} \le (p+m-2)C^{p-1}\,,
\end{equation}
\begin{equation}\label{eq62}
\begin{aligned}
\dfrac {K}{(m-1)^{\frac{p+m-2}{p-1}}}\,\, \max& \left \{ \left [1+mk_2\frac{C^{m-1}}{a} \left (N-2+\frac{1}{m-1}\right ) \right ]^{\frac{p+m-2}{p-1}}\,\right . ;\\ &\left .\left ( 1+m\rho_2\frac{C^{m-1}}{a} \frac{N}{e^2} \right )^{\frac{p+m-2}{p-1}} \right \} \,\le \,\frac{p-m}{(m-1)(p-1)}C^{m-1} \,.
\end{aligned}
\end{equation}

\end{remark}

\begin{lemma}\label{lemmaCsub}
All the conditions in Remark \ref{thmSubC} can hold simultaneously.
\end{lemma}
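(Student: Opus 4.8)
The plan is to exploit the fact that, with $\omega:=C^{m-1}/a$ held fixed, the parameters $C$ and $a$ decouple: once $\omega$ is prescribed one simply sets $a=C^{m-1}/\omega$, so $C$ may be enlarged freely while $\omega$ stays put. The key observation is that the left-hand sides of both \eqref{eq61} and \eqref{eq62} depend on $C$ and $a$ only through $\omega$, whereas their right-hand sides grow in $C$ to $+\infty$. Since we are in the blow-up regime, I expect to take $C$ \emph{large} (in contrast with the supersolution Lemma \ref{lemmaCsuper}, where $C$ is taken small).

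Concretely, first I would fix any $\omega>0$ — for instance $\omega=1$ — and abbreviate
\[
\Lambda(\omega):=\max\left\{1+mk_2\,\omega\Big(N-2+\tfrac{1}{m-1}\Big),\ 1+m\rho_2\,\omega\,\tfrac{N}{e^2}\right\},
\]
so that the left-hand side of \eqref{eq61} equals $\Lambda(\omega)$. Because $x\mapsto x^{\frac{p+m-2}{p-1}}$ is increasing on $(0,+\infty)$ (the exponent is positive, as $p>1$, $m>1$), the left-hand side of \eqref{eq62} is exactly $\frac{K}{(m-1)^{(p+m-2)/(p-1)}}\,\Lambda(\omega)^{\frac{p+m-2}{p-1}}$. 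With $\omega$ frozen, both of these are finite constants independent of $C$.

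It then remains to check that the right-hand sides dominate for $C$ large. In \eqref{eq61} the right-hand side is $(p+m-2)C^{p-1}$, which tends to $+\infty$ as $C\to+\infty$ since $p+m-2>0$ and $p-1>0$; in \eqref{eq62} the right-hand side is $\frac{p-m}{(m-1)(p-1)}C^{m-1}$, which is a positive multiple of $C^{m-1}$ precisely because $p>m$ (so $p-m>0$) and $m>1$, and hence also diverges. Therefore there is a threshold $C_0>0$ such that both \eqref{eq61} and \eqref{eq62} hold for every $C\ge C_0$; fixing such a $C$, setting $a:=C^{m-1}/\omega$, and choosing $T>0$ arbitrarily (it does not enter either inequality) makes all the requirements of Remark \ref{thmSubC} hold simultaneously.

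The only points that require care are bookkeeping rather than analysis: verifying that $K>0$ (which holds because $\tfrac{m-1}{p+m-2}\in(0,1)$ is raised to the two distinct positive powers $\tfrac{m-1}{p-1}<\tfrac{p+m-2}{p-1}$, so the smaller exponent gives the larger value), and confirming the signs of the exponents $p-1$, $m-1$ and of $p-m$ so that the monotonicity of $x\mapsto x^{(p+m-2)/(p-1)}$ and the divergence of both right-hand sides are justified. No genuine obstruction arises, because $C$ and $a$ are independent once $\omega$ is fixed; the whole content of the lemma is that the growth exponents $p-1$ and $m-1$ of the right-hand sides are positive, which is exactly where the standing hypothesis $p>m>1$ enters.
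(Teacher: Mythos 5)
Your proof is correct and takes essentially the same approach as the paper: fix the ratio $\omega=C^{m-1}/a$, note that the left-hand sides of \eqref{eq61} and \eqref{eq62} depend on $C$ and $a$ only through $\omega$, and then choose $C$ sufficiently large (which in turn fixes $a$) so that the right-hand sides, which diverge as $C\to+\infty$ because $p>m>1$, dominate. Your write-up merely makes explicit the monotonicity and sign checks (positivity of $K$, of $p-m$, and of the exponents $p-1$, $m-1$) that the paper's one-line proof leaves implicit.
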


\begin{proof}
We can take $\omega>0$ such that
$$
\omega_0\le \omega \le \omega_1
$$
for suitable $0<\omega_0<\omega_1$ and we can choose $C>0$ sufficiently large to guarantee \eqref{eq61} and \eqref{eq62} (so, $a>0$ is fixed, too).
\end{proof}

\begin{proof}[Proof of Theorem \ref{teosubsolutioncritical}]
We now prove Theorem \ref{teosubsolutioncritical}, by means of Proposition \ref{propsubsolutioncritical}. In view of Lemma \ref{lemmaCsub} we can assume that all conditions of Remark \ref{thmSubC} are fulfilled. Set
$$
\zeta=(T-t)^{-\alpha}\,, \quad \eta=(T-t)^{-\beta}\,, \quad \text{for all} \quad t>0\,,
$$
and $\alpha$ and $\beta$ as defined in \eqref{alphabeta}. Then
\begin{equation}
\begin{aligned}
& \underline\sigma(t) := \left [ \frac{1}{m-1}+\frac{C^{m-1}}{a}\frac{m}{m-1}k_2\left(\frac 1 {m-1}+N-2\right)\right] (T-t)^{-\frac{p}{p-1}}\,,\\
&\underline \delta(t) := \frac{p-m}{(m-1)(p-1)} (T-t)^{-\frac{p}{p-1}}\,,  \\
& \underline\gamma(t):=C^{p-1} (T-t)^{-\frac{p}{p-1}}\,, \\
& \underline \sigma_0(t) :=  \frac 1 {m-1} \left [1+\frac{\rho_2 N m}{e^2} \frac{C^{m-1}}{a} \right](T-t)^{-\frac{p}{p-1}} \,.
\end{aligned}
\label{sost}
\end{equation}

Let $p>m$. Condition \eqref{eq61} implies \eqref{eq40}, \eqref{eq41}, while condition \eqref{eq62} implies \eqref{eq42}, \eqref{eq43}. Hence by Propositions \ref{propsubsolutioncritical} and \ref{cpsub} the thesis follows.
\end{proof}

\bigskip
\bigskip
\bigskip

%


\end{document}